\def\bN{\mathbb{N}}
\def\bC{\mathbb{C}}
\def\bR{\mathbb{R}}
\def\bZ{\mathbb{Z}}
\def\p{\par}
\def\dd{\mathrm{d}}
\theoremstyle{plain}
\newtheorem{theorem}[equation]{Theorem}
\newtheorem{lemma}[equation]{Lemma}
\newtheorem{corollary}[equation]{Corollary}
\newtheorem{proposition}[equation]{Proposition}
\newtheorem{conjecture}[equation]{Conjecture}
\theoremstyle{definition}
\newtheorem{definition}[equation]{Definition}
\newtheorem{problem}[equation]{Problem}
\theoremstyle{remark}
\newcommand{\defn}[1]{\begin{definition}#1\end{definition}}
\numberwithin{equation}{section}
\DeclareMathOperator{\id}{Id}
\DeclareMathOperator{\Aut}{Aut}
\def\eps{\varepsilon}
\begin{document}

\title{Quantitative Preservations of Ulam Stability-type Estimates}
\author{Mason Sharp}
\address{Mason Sharp, Department of Mathematics
\\
University of Alabama
\\
Tuscaloosa, AL, 35487, USA}
\email{mesharp3@ua.edu}
\date{January 26 2025}
\begin{abstract}
  We show some preservation results of amenably extending strongly Ulam stable groups under mild decay assumptions, including quantitative preservation of asymptotic bounds under the assumption that the modulus of stability is H\"older continuous of exponent $s>\frac 1 2$ at 0, utilizing some simplistic integral estimates. Additionally, we show some partial results around inductive preservation of modulus bounds in infinite dimensions using these integral estimates, as well as strong quantitative preservation in the finite dimensional case. This implies the existence of $\mathfrak{U}$ uniformly stable existential closures among groups with sufficiently large Lipschitz estimates of any countable group. Finally, we show quantitative control preserving difficulty of approximation of maps over stable groups on diagonally embedding into higher dimensions.
\end{abstract}
\maketitle

%\tableofcontents

\section{Introduction}

\sectionmark{Introduction}
%[do later, once what is necessary to include is determined]
\p Amenability of groups has been a quite useful property in studying unitary representations, or otherwise geometric-flavored and group von Neumann algebra properties about groups and their actions, as evidenced by the extensive literature and litany of equivalent formulations found over the years. For our purposes, we focus on a particular unitary representation stability property held by these groups. 

\begin{definition}
    Given a group $H$, Hilbert space $\mathcal H$, and map $f:H\to U(\mathcal H)$ with $f(1)=\id$, we define the defect $\delta(f)$ by \begin{equation*}\delta(f)=\sup_{x,y}\Vert f(xy)-f(x)f(y)\Vert\end{equation*}
\end{definition}
Using this definition, by an easy triangle inequality argument, we can find
\begin{proposition}\label{prop2}
    If $\Vert f-g\Vert \leq a$, then $\delta(g)\leq \delta(f)+3a$
\end{proposition}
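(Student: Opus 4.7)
The plan is to unfold the quantity $g(xy) - g(x)g(y)$ by inserting the intermediate terms $f(xy)$ and $f(x)f(y)$, then control each piece via the triangle inequality. Concretely, I would write
\begin{equation*}
g(xy) - g(x)g(y) = \bigl[g(xy) - f(xy)\bigr] + \bigl[f(xy) - f(x)f(y)\bigr] + \bigl[f(x)f(y) - g(x)g(y)\bigr].
\end{equation*}
The first bracket has operator norm at most $a$ by hypothesis, and the middle bracket is bounded in norm by $\delta(f)$ by definition of the defect.

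The third bracket is where the factor of $2$ (and hence the overall factor of $3$) will appear. I would split it as
\begin{equation*}
f(x)f(y) - g(x)g(y) = f(x)\bigl(f(y)-g(y)\bigr) + \bigl(f(x)-g(x)\bigr)g(y),
\end{equation*}
and then use that both $f(x)$ and $g(y)$ are unitaries, hence isometries, to conclude that each summand is bounded in norm by $\|f(y)-g(y)\|\leq a$ and $\|f(x)-g(x)\|\leq a$ respectively. This gives a bound of $2a$ on the third bracket.

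Putting these three estimates together yields $\|g(xy)-g(x)g(y)\| \leq a + \delta(f) + 2a = \delta(f) + 3a$ for every pair $x,y$, and taking the supremum over $x,y \in H$ delivers the claim. There is no serious obstacle here, since the only nontrivial input is the unitarity of $f(x)$ and $g(y)$, which is built into the setup; the rest is triangle inequality bookkeeping.
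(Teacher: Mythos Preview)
Your proof is correct and is exactly the ``easy triangle inequality argument'' the paper alludes to without spelling out; the paper gives no further details beyond that phrase, and your decomposition is the standard one.
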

so this particular function is $\infty$-norm continuous. 
\defn{For a group $G$, we define $F_G:[0,2]\to [0,2]$ by 
\begin{align*}
    F_G(\eps) =& \sup \{\inf\{ \Vert f-\rho\Vert_\infty\;\mid\; \rho:G\to U(\mathcal H)\text{ is a representation}\}\\
    &\quad\quad\quad\mid \;\mathcal H\text{ is Hilbert,}f:G\to U(\mathcal H),f(1)=\id,\delta(f)\leq \eps\}.
\end{align*}
}
We note that though Hilbert spaces may be a proper class, these are subclasses of $\bR$ bounded above by 2 and it is sufficient to find any continuous function $[0,2]\to[0,2]$ bounding this from above and evaluating to 0 at 0, so well-definedness of this particular function is less important than studying what bounds it. A further inquiry into bounding the dimensions required, or otherwise ensuring it is always well-defined without nontrivial set theoretic maneuvering, would be a useful endeavor, but we will simply assume it exists as a function for our groups to which the following definition applies.

\defn{A group $G$ is \emph{Strongly Ulam Stable} (SUS) with modulus $F_G$ if the function is continuous at 0.}

\p The relevance of this property is the following theorem of Kazhdan and the motivation behind this work:
\begin{theorem}[Kazhdan \cite{Kaz}]
    If $G$ is amenable, then $G$ is strongly Ulam stable. Furthermore, $\frac 1 2 \eps \leq F_G(\eps) \leq 2\eps$ for sufficiently small $\eps$.
\end{theorem}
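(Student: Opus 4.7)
The plan is to derive the upper bound $F_G(\varepsilon) \leq 2\varepsilon$ from the averaging construction enabled by the left-invariant mean $m$ on $\ell^\infty(G)$ that amenability provides, and to derive the lower bound $F_G(\varepsilon) \geq \tfrac{1}{2}\varepsilon$ by exhibiting a small explicit quasi-representation. Given $f\colon G\to U(\mathcal H)$ with $f(1)=\id$ and $\delta(f)\leq\varepsilon$, the candidate representation is defined via the bilinear form
\[
\langle \rho(g)\xi,\eta\rangle := m_x\bigl\langle f(x)^{-1}f(xg)\xi,\,\eta\bigr\rangle,
\]
which represents a bounded operator because the closed unit ball of $\mathcal B(\mathcal H)$ is WOT-compact. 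The pointwise estimate $\|f(x)^{-1}f(xg) - f(g)\| = \|f(xg) - f(x)f(g)\| \leq \varepsilon$, using unitarity of $f(x)$, immediately yields $\|\rho(g)-f(g)\|\leq\varepsilon$ for every $g$.

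The core of the argument is verifying that $\rho$ is a \emph{genuine} homomorphism. The starting point is the exact cocycle identity
\[
f(x)^{-1}f(xgh) = \bigl(f(x)^{-1}f(xg)\bigr)\bigl(f(xg)^{-1}f(xgh)\bigr),
\]
combined with right-translation invariance $m_x\phi(xg)=m_x\phi(x)$. Applying $m_x$ to both sides and relating the resulting mean-of-product to $\rho(g)\rho(h)$ is the step where the argument becomes delicate, because invariant means do not in general factor over operator products. The honest resolution either iterates the mean on $G\times G$ and exploits diagonal invariance, or replaces the bare averaging with the amenable fixed-point theorem applied to the $G$-action on a WOT-compact convex subset of $\mathcal B(\mathcal H)^G$ whose fixed points are precisely the homomorphisms $\varepsilon$-close to $f$. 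This is the main obstacle; the rest is bookkeeping.

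Once $\rho\colon G\to\mathcal B(\mathcal H)$ is an exact homomorphism with $\|\rho(g)-f(g)\|\leq\varepsilon$ for every $g$, closeness to a unitary together with $\rho(g^{-1})=\rho(g)^{-1}$ and $\|\rho(g^{-1})\|\leq 1+\varepsilon$ forces each $\rho(g)$ to be unitary (or a polar-decomposition correction absorbs at most one additional factor of $\varepsilon$), producing the advertised bound $\|f-\rho\|_\infty\leq 2\varepsilon$.

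For the lower bound, the model example is one-dimensional: on $\bZ/2$ take $f(0)=1$, $f(1)=e^{i\theta}$, which gives $\delta(f)=2\sin(\theta/2)$ while the distance to either representation in $\{1,-1\}$ is $2\sin(\theta/4)$; the same ratio appears on $\bZ$ by setting $f(n)=e^{i\theta\lfloor n/2\rfloor}$, for which the nearest true character is $e^{i(\theta/2)n}$. Normalizing so that the defect equals $\varepsilon$ and letting $\varepsilon\to 0$, the distance to any representation is $\tfrac{1}{2}\varepsilon+o(\varepsilon)$, and transporting this model along any nontrivial character of $G$ (which exists for any nontrivial amenable $G$ by Pontryagin duality on its abelianization, up to passage to an invariant quotient) yields the lower bound. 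The principal difficulty throughout is the exact homomorphism verification in the averaging step; every other estimate reduces to one or two triangle inequalities together with Proposition~\ref{prop2}.
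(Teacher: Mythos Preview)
The paper does not give its own proof of this theorem; it is quoted from \cite{Kaz}. That said, the special case $H=\{1\}$ of the proof of Theorem~\ref{productpreserve.thm} \emph{is} Kazhdan's argument, so that is the natural point of comparison. Your upper-bound sketch starts correctly but the step you flag as ``the main obstacle'' is not resolved by either of the mechanisms you name. A single application of the mean to $f(x)^{-1}f(xg)$ does \emph{not} produce an exact homomorphism, and your fixed-point formulation is circular: you ask for a fixed point in the set of homomorphisms $\varepsilon$-close to $f$, but nonemptiness of that set is exactly the conclusion. What actually happens --- and what the proof of Theorem~\ref{productpreserve.thm} carries out explicitly --- is that one averaging step followed by polar correction yields a unitary $f_1$ with $\Vert f_1-f\Vert\leq \delta(f)+2\delta(f)^2$ and $\delta(f_1)\leq 8\delta(f)^2$; this is precisely inequality~\eqref{ineq4} specialized to $y=y'=1$. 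Iterating and summing the geometric series of distances is what produces the bound $F_G(\varepsilon)\leq 2\varepsilon$. Your proposal never reaches the quadratic defect estimate, which is the entire content of the argument.

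Your lower bound has a separate genuine gap. The claim that every nontrivial amenable group admits a nontrivial one-dimensional unitary character is false: any perfect amenable group (for instance $A_5$, or the finitely generated infinite simple amenable groups of Juschenko--Monod) has trivial abelianization and hence no nontrivial characters, so there is nothing along which to ``transport'' your $\bZ/2$ or $\bZ$ model. The inequality $F_G(\varepsilon)\geq\tfrac12\varepsilon$ is true for every nontrivial $G$, but it requires a construction that does not route through one-dimensional representations --- for example, perturbing a faithful representation such as the left regular one. (Incidentally, your $\bZ/2$ arithmetic is off by a factor: with $f(1)=e^{i\theta}$ one has $\delta(f)=\vert 1-e^{2i\theta}\vert=2\sin\theta$ and distance $2\sin(\theta/2)$ to the trivial character, not $2\sin(\theta/2)$ and $2\sin(\theta/4)$; the limiting ratio $\tfrac12$ survives, but the formulas as written are wrong.)
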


\p Utilizing similar techniques to how one can prove this Theorem 1.5, we prove the main result (Theorem \ref{productpreserve.thm}) of section 3 below:

\begin{theorem}
    Suppose $G$ is an amenable group with mean $\int\dd g$ and $H$ is SUS with $F_H(\eps) \in O(\eps)$, then $G\times H$ is SUS with $F_{G\times H} \in O(\eps)$.
\end{theorem}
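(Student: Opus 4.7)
The plan is to straighten out the $G$- and $H$-components of an approximate representation $f\colon G\times H\to U(\mathcal H)$ separately, using Kazhdan's theorem for the amenable $G$ on one hand and the SUS hypothesis for $H$ on the other, then use the amenable mean on $G$ once more to force the two resulting representations to commute.

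Concretely, given $f$ with $f(1,1)=\id$ and $\delta(f)\leq\eps$, restrict to each factor: the maps $f_G(g):=f(g,1)$ and $f_H(h):=f(1,h)$ are approximate representations of $G$ and $H$ with defect at most $\eps$. Kazhdan's theorem produces a representation $\rho_G\colon G\to U(\mathcal H)$ with $\Vert f_G-\rho_G\Vert_\infty\leq 2\eps$, and the SUS hypothesis produces a representation $\rho_H\colon H\to U(\mathcal H)$ with $\Vert f_H-\rho_H\Vert_\infty\in O(\eps)$. Because $(g,1)$ and $(1,h)$ commute in $G\times H$, both $f_G(g)f_H(h)$ and $f_H(h)f_G(g)$ lie within $\eps$ of $f(g,h)$; combining with the above approximations gives
\begin{equation*}
\sup_{g\in G,\,h\in H}\Vert\rho_G(g)\rho_H(h)-\rho_H(h)\rho_G(g)\Vert\in O(\eps),
\end{equation*}
so $\rho_G$ and $\rho_H$ approximately commute.

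To promote approximate commutation to exact commutation, use the amenable mean on $G$ to define the conditional expectation $E(X):=\int_G \rho_G(g')\,X\,\rho_G(g')^*\,\dd g'$ onto the commutant $\rho_G(G)'$, and let $\widetilde\rho_H(h):=E(\rho_H(h))$. Then $\widetilde\rho_H$ takes values in $\rho_G(G)'$ and is $O(\eps)$-close to $\rho_H$, and the bimodularity of $E$ over $\rho_G(G)'$ (together with the closeness of $\widetilde\rho_H$ to the genuine representation $\rho_H$) yields $\delta(\widetilde\rho_H)\in O(\eps)$. Polar decomposition inside $\rho_G(G)'$ produces a unitary-valued approximate representation $u\colon H\to U(\rho_G(G)')$, still $O(\eps)$-close to $\rho_H$. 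Applying the SUS modulus of $H$ internally to the commutant yields an exact representation $\rho_H'\colon H\to U(\rho_G(G)')$ at distance $O(\eps)$ from $u$; setting $\rho(g,h):=\rho_G(g)\rho_H'(h)$ then produces a genuine representation of $G\times H$ which by the triangle inequality lies within $O(\eps)$ of $f$, establishing $F_{G\times H}(\eps)\in O(\eps)$.

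The main obstacle is guaranteeing that the final application of SUS for $H$ can be carried out inside the commutant $\rho_G(G)'$ rather than merely in $\mathcal B(\mathcal H)$, so that $\rho_H'$ genuinely commutes with $\rho_G$. Either the SUS hypothesis is inherited by any ambient von Neumann subalgebra containing the image (as happens when the stability is witnessed by Kazhdan-style averaging, which preserves the generated von Neumann algebra), or one must alternate averaging and SUS, leveraging the linear control on the modulus to ensure the residual non-commutation is summable and can be driven to zero.
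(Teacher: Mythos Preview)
Your outline is on the right track and in fact converges to the paper's own argument, but as written it is incomplete, and the gap is precisely the one you flag in your final paragraph.

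The one-shot route---applying SUS for $H$ \emph{inside} the commutant $\rho_G(G)'$---does not follow from the hypothesis. Strong Ulam stability is a statement about maps into $U(\mathcal H)$ for Hilbert spaces $\mathcal H$; the unitary group of a general von Neumann algebra such as $\rho_G(G)'$ is not of this form unless $\rho_G(G)'$ happens to be a type~I factor, and there is no reason the SUS modulus should transfer. Your parenthetical suggestion that Kazhdan-style averaging ``preserves the generated von Neumann algebra'' is beside the point, since you are not assuming $H$ is amenable, so you have no such averaging available for the $H$-side.

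Your fallback---alternate the conditional expectation $E$ with the SUS correction for $H$ and show the errors are summable---is exactly what the paper does, and is the genuine content of the proof. But for this to work you need a sharper defect estimate than the $\delta(\widetilde\rho_H)\in O(\eps)$ you state: with only linear control, one round of SUS puts you back at a defect of order $\eps$ and nothing improves. The key point (which is Kazhdan's computation, and which the paper carries out as inequality \eqref{ineq4}) is that after averaging, the defect of $\widetilde\rho_H$ is actually $O(\eps^2)$. Concretely, expanding
\[
\int_G \bigl(\rho_G(g)\rho_H(h)\rho_G(g)^*-\rho_H(h)\bigr)^*\bigl(\rho_G(g)\rho_H(h')\rho_G(g)^*-\rho_H(h')\bigr)\,\dd g,
\]
whose norm is $O(\eps^2)$, and using that $\rho_H$ is a genuine representation and $E$ is $*$-preserving, yields $\lVert \widetilde\rho_H(h^{-1}h')-\widetilde\rho_H(h)^*\widetilde\rho_H(h')\rVert\in O(\eps^2)$. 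With this squaring, the iteration gives defects $d_{n+1}\lesssim c^2 d_n^2$, hence doubly exponential decay, and the successive corrections sum to $O(\eps)$. The paper writes this iteration out explicitly; your proposal stops just short of it.
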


Further, by the methods and bounds involved in the proof generalizing appropriately, we prove Corollaries \ref{3.13} and \ref{3.14} respectively strengthening this by weakening the estimate to only require $O(\eps^s)$ for some $s>\frac 1 2$, or extending to groups of the form $G \rtimes_a H$ where $a[H]\leq Aut(G)$ is amenable, for which Theorem \ref{productpreserve.thm} corresponds to $a[H] = 1$. In fact, as expounded upon lightly in Remark \ref{3.17}, these proofs are local to each Hilbert space.
\p The case of having a Lipschitz (or, in the phrasing of \cite{FR1}, linear) estimate is of particular relevance and importance as these correspond to vanishing asymptotic cohomology groups, as well as Kazhdan's theorem (Theorem 1.5 above) stating that amenable groups do in fact have such an estimate. It is for this reason that we separated Corollary \ref{3.13} from the proof, and Corollary \ref{3.14} was separated due to the distinct treatment of the variable of integration as opposed to similar minor generalizations like groups of the form $H \rtimes G$ for $G$ amenable. The relevance of these semidirect products can be seen in \cite{Alp23}, as restricted wreath products directly witness amenability by results of Alpeev, though we fail to prove preservation in wreaths due to injectivity of the map into $Aut(G)$. 
\p We note that although \cite{FR1} in Propositions 1.5 and 1.8 does prove an essentially stronger result than \ref{productpreserve.thm} as stated, in that coamenable embeddings preserve stability properties, our result gives a novel method of this case through explicit controls of convergence rates, as well as the novel result that it further applies to nonlinear control in these particular circumstances, with quantitative control on the stability moduli of the extensions in question.

\p In a different direction, we also examine the closure of the class under nice direct limits in the form of inductivity, which we demonstrate for finite dimensions in Theorem \ref{4.6} below:

\begin{theorem}
    Given a proper $\kappa$-sequence of subgroups $(G_i)$ which are all uniformly $(U(d), \Vert\cdot\Vert_s)$ stable for any norm $\Vert\cdot \Vert_s$ such that $F(\eps) = \sup_i F^{(d)}_{G_i}(\eps)$ is continuous at 0, we have that $G = \bigcup_i G_i$ is uniformly stable with $F^{(d)}_G(\eps)\leq F(\eps)$.
\end{theorem}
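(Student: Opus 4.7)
The plan is to exploit compactness of $U(d)$ to patch together, via an inverse limit argument, the approximating representations obtained on each $G_i$ from uniform stability into a single approximating representation of the union $G$. The crucial point is that for finite $d$, the space $\mathrm{Hom}(G_i, U(d))$ is compact in the pointwise topology, so one can extract coherent threads of approximations.

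First I would fix $f: G \to U(d)$ with $\delta(f) \leq \epsilon$ and, for each $i < \kappa$, restrict to $f_i := f|_{G_i}$ (which still has defect at most $\epsilon$). Setting
\begin{equation*}
R_i := \{\rho: G_i \to U(d) \mid \rho \text{ a homomorphism},\ \|\rho - f_i\|_\infty \leq F(\epsilon)\},
\end{equation*}
I would check nonemptiness using uniform $(U(d), \|\cdot\|_s)$ stability of $G_i$ together with lower semicontinuity of $\rho \mapsto \|\rho - f_i\|_\infty$ on the compact space $\mathrm{Hom}(G_i, U(d)) \subseteq U(d)^{G_i}$, so the infimum is attained and bounded by $F^{(d)}_{G_i}(\epsilon) \leq F(\epsilon)$. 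Each $R_i$ is closed in $U(d)^{G_i}$ (being an intersection of closed pointwise constraints together with the closed set of homomorphisms), hence compact by Tychonoff, and the restriction maps $\mathrm{res}_{ij}: R_j \to R_i$ for $i \leq j$ are continuous and well-defined.

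Next I would establish that the inverse limit $\varprojlim_i R_i \subseteq \prod_i R_i$ is nonempty via the finite intersection property. For any finite family of coherence constraints of the form $\rho_{i_k}|_{G_{j_k}} = \rho_{j_k}$, the maximum index $I$ among the finitely many $i_k, j_k$ is well-defined because the chain is totally ordered (this is where properness of the $\kappa$-sequence is critical). Choosing any $\rho \in R_I$ and forming the tuple whose $i$-th coordinate is $\rho|_{G_i}$ for $i \leq I$ and arbitrary in $R_i$ otherwise satisfies the given constraints. Compactness of $\prod_i R_i$ then yields a global thread $(\rho_i) \in \varprojlim_i R_i$, which glues to a single homomorphism $\rho: G \to U(d)$ via $\rho(g) := \rho_i(g)$ for any $i$ with $g \in G_i$. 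The bound $\|\rho - f\|_\infty \leq F(\epsilon)$ follows pointwise, and since $f$ was arbitrary with defect at most $\epsilon$, this yields $F^{(d)}_G(\epsilon) \leq F(\epsilon)$.

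The main obstacle is precisely the inverse-limit/coherence step, and it is what forces the finite-dimensional hypothesis: without compactness of $U(d)$, $\mathrm{Hom}(G_i, U(d))$ fails to be compact in the pointwise topology, the $R_i$ need not even contain their infima, and the FIP argument collapses. Beyond that, the total ordering of a $\kappa$-sequence is what makes the finite intersection property nearly trivial to verify, since any finite constraint family reduces to picking a witness in a single $R_I$; a branching or genuinely directed index set would demand a more substantive selection argument.
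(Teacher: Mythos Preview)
Your argument is correct and follows essentially the same approach as the paper: both exploit compactness of $U(d)$ (via Tychonoff) to extract a global approximating representation from the per-$G_i$ approximants. The paper's packaging is marginally more direct---it extends each $\rho_i$ trivially to all of $G$, takes a pointwise cluster point of the resulting net in the compact space $U(d)^G$, and verifies that this cluster point is a homomorphism within $F(\eps)$ of $f$---whereas you phrase the same compactness in inverse-limit/FIP language; the two are interchangeable here.
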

Here, $F^{(d)}_G$ is defined identically to $F_G$ but only looking at $(U(d),\Vert\cdot\Vert_s)$. The main implication of this is the existence of existential closures among groups with uniform bounds (Corollary \ref{4.9}), though further quantitative control regarding restricted wreaths can provide significant strengthenings.

\p Finally, we present in Section 5 through Proposition \ref{prop5.2} a way one can encode the germ of the SUS modulus $F_G$ under assumption of $O(\eps^s)$ decay for some $s>\frac{1}{2}$. Because of this, and the other results proven here or in \cite{FR1}, \cite{BOT}, and \cite{GLMR}, we posit the following conjecture (Conjecture \ref{conj5.1})
\begin{conjecture}
    Every Strongly Ulam Stable group $G$ has $O(\eps)$ decay at 0 of the SUS modulus $F_G$.
\end{conjecture}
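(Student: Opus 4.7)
The plan is a two-stage bootstrap exploiting the critical exponent $s = \tfrac{1}{2}$ that appears in both Corollary \ref{3.13} and Proposition \ref{prop5.2}. Stage one would extract from mere continuity of $F_G$ at $0$ some crude polynomial rate $F_G(\eps) = O(\eps^{s_0})$ with $s_0 > 0$; stage two would iterate an amplification procedure until the exponent exceeds $\tfrac{1}{2}$, after which the paper's integral machinery upgrades the rate to $O(\eps)$.

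For stage one I would argue by contradiction via a metric ultraproduct. Suppose $F_G(\eps_n)/\eps_n^{s} \to \infty$ for every $s > 0$ along some sequence $\eps_n \downarrow 0$, and pick witnessing near-homomorphisms $f_n \colon G \to U(\mathcal H_n)$ together with lower bounds $\inf_\rho\|f_n - \rho\|_\infty \geq \delta_n$ satisfying $\delta_n / \eps_n^s \to \infty$. Along a non-principal ultrafilter $\mathcal U$, the class $[(f_n)]$ is an honest homomorphism into the metric ultraproduct of the unitary groups, and the SUS hypothesis, suitably quantified, should allow one to choose strict approximants $\rho_n$ whose distances to $f_n$ obey $\|f_n - \rho_n\|_\infty / \eps_n^{s_0}$ bounded for some $s_0 > 0$; one could leverage Theorem \ref{4.6} and reduction to finite-dimensional subrepresentations to make the compactness argument on $U(d)$-valued maps rigorous.

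For stage two, once a polynomial rate is in hand I would apply an amplification of the form $f \mapsto f \otimes f$ or the diagonal pullback along $G \hookrightarrow G \times G$, combined with the amenable-averaging machinery of Theorem \ref{productpreserve.thm}, to derive an iteration $s \mapsto \alpha(s)$ with $\alpha(s) > s$ whenever $s < 1$. The point is that a defect of order $\eps^{s_0}$ on $G$ should produce an effective defect of order $\eps^{2s_0 - \eta}$ after an amenable averaging step over a diagonal copy; iterating finitely many times pushes $s$ past $\tfrac{1}{2}$, Corollary \ref{3.13} becomes available, and Proposition \ref{prop5.2} then encodes the germ and completes the upgrade to linear decay.

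The main obstacle is unambiguously stage one. Continuity at a single point does not in general imply any polynomial control -- one can easily imagine a modulus like $F_G(\eps) \sim 1/\log\log(1/\eps)$ -- and ruling out such pathological moduli requires a genuinely structural input beyond abstract compactness on $U(d)$. The most promising source of such input seems to be an effective open-mapping-type principle for the nonlinear cocycle-to-coboundary correspondence underlying SUS, or equivalently a quantitative vanishing statement in an appropriate asymptotic cohomology theory in the spirit of \cite{FR1}. Absent such a principle, the bootstrap has nothing to build on, which is presumably why the assertion remains conjectural; any serious attempt at a proof would, I believe, stand or fall at this first polynomialization step.
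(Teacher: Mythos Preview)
The statement you are attempting to prove is labelled in the paper as a \emph{Conjecture} (Conjecture~\ref{conj5.1}); the paper offers no proof, only motivation via Proposition~\ref{prop5.2} and the surrounding discussion. So there is nothing to compare your attempt against, and your submission should be read as a proposed attack on an open problem rather than as a recovery of an existing argument.

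On the substance of your sketch: you have correctly identified the main obstruction in stage one, but stage two is also not sound as written. The amplification $f\mapsto f\otimes f$ does not square the defect---if $\delta(f)\leq\eps$ then $\delta(f\otimes f)$ is still of order $\eps$, not $\eps^2$---so there is no mechanism by which the H\"older exponent improves under tensoring. The diagonal pullback along $G\hookrightarrow G\times G$ combined with Theorem~\ref{productpreserve.thm} is likewise unavailable: that theorem requires one factor to be \emph{amenable}, which is precisely what is not assumed of $G$, and in any case it produces estimates for $G\times H$ rather than for $G$ itself. Finally, Proposition~\ref{prop5.2} does not upgrade H\"older to Lipschitz; it shows that Lipschitz \emph{lower} bounds on the distance-to-representations are preserved under direct sums, which is evidence for the conjecture but not a bootstrapping device. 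In short, even granting a polynomial rate from stage one, your stage two has no working iteration step $s\mapsto\alpha(s)$ with $\alpha(s)>s$, and the conjecture remains open for the reasons the paper indicates.
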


\p To date, no examples of SUS groups which are not amenable have been found, which brings into question if any such groups exist at all. We present partial results towards nonexistence of non-amenable SUS groups by showcasing various meta-structural properties of the class of SUS groups which are shared by amenable groups, giving a finer understanding of how a counterexample must behave.

For further reading on related properties and extended exposition, we refer the reader to \cite{BOT} and \cite{deChi}

\subsubsection*{Acknowledgements} We thank Ilijas Farah, Dalton Sakthivadivel, Aareyan Manzoor, and Anthony Chen for helpful correspondence on ideas for and around this note, as well as Simon Bortz for extended discussion on style and the intuition around estimates.

\section{Integral estimates}
\sectionmark{Integral estimates}
\p In order to manipulate and bound various functions on our groups, it is necessary to develop some tools by which we can transform or combine them. To that end, we suggest integral transforms as integration has relatively nice properties and is well understood, giving us much material to work with. First, we define how such an integral is evaluated.

\defn{If $\mu$ is a finitely additive measure on a set $X$, and $A$ is a normed vector space, denote by $[X]A$ the set of (essentially) bounded measurable functions $X\to A$ under the supremum norm $\Vert \cdot \Vert_\infty$.}

\defn{In the same setting as before, if $A$ has predual $B$, we define for $f_x\in [X]A$ the integral $\int f\dd\mu$ as the linear map $(b\mapsto \int f_x(b)\dd\mu): B\to\bC$ when it exists.}

\p In these above definitions, this creates a linear map on $B$ to $\bC$, up to integrability of the functions at hand. If this linear map is continuous, since $B^* = A$, there is a point in $A$ to which this integral evaluates, so all we need to show is boundedness of this linear map. If we assume that taking the norms of our function is measurable (as the function $x\mapsto \Vert f_x\Vert$, that is), we have the following proposition assuring boundedness. 
\begin{lemma}
    Given appropriate $\mu$, $f_x\in [X]A$, such that the norm $\Vert f_x\Vert$ is $\mu$-measurable, we have that \begin{equation}
    \bigg\Vert\int f_x\dd \mu \bigg\Vert \leq \int \Vert f_x\Vert \dd \mu
\end{equation}
\end{lemma}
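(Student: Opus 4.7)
The plan is to reduce the vector-valued inequality to the familiar scalar triangle inequality by testing against arbitrary elements of the predual $B$. Since the integral $\int f_x \, d\mu$ is \emph{defined} as the linear functional $b \mapsto \int f_x(b) \, d\mu$ viewed as an element of $B^* = A$, its norm in $A$ coincides with the operator norm of this functional on $B$. So the entire task is to estimate this operator norm.

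Concretely, I would fix $b \in B$ and bound the scalar integrand pointwise using the duality pairing: $|f_x(b)| \leq \|f_x\|_A \, \|b\|_B$. Then an application of the scalar triangle inequality for the finitely additive integral followed by monotonicity yields
\begin{equation*}
    \left| \int f_x(b) \, d\mu \right| \leq \int |f_x(b)| \, d\mu \leq \|b\|_B \int \|f_x\| \, d\mu.
\end{equation*}
Taking the supremum over $b$ in the unit ball of $B$ gives the operator-norm bound $\int \|f_x\| \, d\mu$, and translating this back across the identification $A = B^*$ delivers (2.1).

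Two measurability / integrability points need to be checked before this chain is legitimate. First, I need $x \mapsto f_x(b)$ to be $\mu$-measurable so that the scalar integral on the left is even defined; this should follow from the fact that $f \in [X]A$ consists of (essentially) bounded measurable functions into $A$, since evaluation against $b \in B$ is weak-$*$ continuous. Second, I need $x \mapsto \|f_x\|$ to be $\mu$-measurable so that the right-hand side of (2.1) is defined — and this is exactly the standing hypothesis of the lemma.

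The only real obstacle is the scalar triangle inequality for the finitely additive integral; that is, verifying $|\int g\, d\mu| \leq \int |g|\, d\mu$ for bounded measurable $g \colon X \to \mathbb{C}$. This is classical (it is immediate from approximation by simple functions, where it reduces to the ordinary triangle inequality in $\mathbb{C}$), but since $\mu$ is only assumed finitely additive rather than $\sigma$-additive one must use the simple-function/$L^\infty$ version of integration rather than a measure-theoretic dominated convergence argument. Once this scalar inequality is in hand, the rest of the proof is a one-line calculation.
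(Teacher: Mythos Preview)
Your proposal is correct and follows essentially the same route as the paper: test the functional $\int f_x\,\dd\mu$ against $b$ in the unit ball of $B$, apply the scalar triangle inequality $\lvert \int f_x(b)\,\dd\mu\rvert \leq \int \lvert f_x(b)\rvert\,\dd\mu$, then bound $\lvert f_x(b)\rvert \leq \Vert f_x\Vert\,\Vert b\Vert_B$ and take the supremum. Your additional remarks on measurability and on the finitely additive scalar triangle inequality are more careful than what the paper writes out, but the argument is the same.
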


\begin{proof}
    Since $A = B^*$, for all $x \in X$ we know $\Vert f_x\Vert =\sup_{b\in B;\Vert b\Vert_B = 1} \vert f_x(b)\vert$, and we note that for any $\Vert b\Vert_B=1$ by definition of the integral we see
    \begin{align*}
        \bigg\vert\bigg(\int f_x\dd\mu\bigg)(b)\bigg\vert &= \bigg\vert\int f_x(b)\dd\mu\bigg\vert\\
        &\leq \int \vert f_x(b)\vert\dd\mu\\
        &\leq \int \Vert f_x\Vert \Vert b\Vert_B\dd\mu = \int \Vert f_x\Vert \dd\mu
    \end{align*}
\end{proof}
Refer to \cite{deChi} for a related statement in the case of amenability with fuller generality and specificity regarding the norms considered.

\p From this bound on norms we clearly see the following, applying Tomiyama's theorem (\cite{Blackadar} II.6.10.2 for the statement used) after identifying $A$ with the space of constant functions and endowing $[X]A$ with the supremum norm.
\begin{lemma}
    If $\mu$ is a finitely additive probability measure on $X$, and $A$ is a Banach space with predual $B$, then $\int\dd\mu: [X]A\to A$ is a conditional expectation.
\end{lemma}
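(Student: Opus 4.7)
The plan is to unpack the hypothesis of Tomiyama's theorem into four ingredients and verify each one against the definitions and the preceding lemma. Tomiyama says that any linear, contractive, idempotent projection of a Banach algebra onto a closed subalgebra is automatically a conditional expectation, and the whole argument is to set things up so we can invoke that statement.

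First, I would fix the ambient structure: view $A$ inside $[X]A$ via the isometric embedding $a\mapsto \hat a$, where $\hat a_x = a$ for all $x$, and note that under the supremum norm $\hat A$ is a closed (indeed $1$-complemented) subspace, and in the case relevant to the paper (where $A$ is a von Neumann algebra) also a norm-closed $*$-subalgebra of $[X]A$. Linearity of $\int\dd\mu$ is immediate from linearity of $\mu$ together with the defining identity $(\int f\dd\mu)(b) = \int f_x(b)\dd\mu$, since the inner integral is linear in $f$ for each fixed $b\in B$.

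Second, I would verify the projection property $\int \hat a\dd\mu = a$. For any $b\in B$ one has $\hat a_x(b) = a(b)$, a constant scalar function, so $\int \hat a_x(b)\dd\mu = a(b)\cdot\mu(X) = a(b)$ since $\mu$ is a probability measure. Hence $(\int\hat a\dd\mu)(b) = a(b)$ for every $b$, so $\int \hat a\dd\mu = a$ as elements of $A$. Composing with the embedding $A\hookrightarrow [X]A$ this gives idempotency.

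Third, I would verify contractivity. The previous lemma gives $\Vert \int f\dd\mu\Vert \leq \int \Vert f_x\Vert \dd\mu$, and since $\Vert f_x\Vert \leq \Vert f\Vert_\infty$ pointwise and $\mu$ is a probability measure, this integral is bounded by $\Vert f\Vert_\infty$. (A minor wrinkle is that the lemma requires $x\mapsto \Vert f_x\Vert$ to be $\mu$-measurable, but for the bound by $\Vert f\Vert_\infty$ one only needs to apply the integral to the constant majorant, and finite additivity of $\mu$ suffices since the integrand is essentially bounded.) Thus $\int\dd\mu$ is a contraction from $[X]A$ to $A$, hence onto $A$ since it already fixes $\hat A$ pointwise.

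With linearity, idempotency, and contractivity established, Tomiyama's theorem (Blackadar II.6.10.2) now promotes $\int\dd\mu$ from a contractive projection to a conditional expectation, meaning it is positive and satisfies the bimodule property $\int (\hat a f \hat c)\dd\mu = a \bigl(\int f\dd\mu\bigr) c$ over its image. The main obstacle I anticipate is bookkeeping: making sure $[X]A$ carries the structure Tomiyama's theorem is stated for (a $C^*$- or von Neumann-algebraic setting when $A$ is one), and checking that the measurability assumption of the preceding lemma is harmless in the essentially bounded setting, so that the contractivity bound goes through without appealing to anything beyond finite additivity of $\mu$.
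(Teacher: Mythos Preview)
Your proposal is correct and follows essentially the same approach as the paper: identify $A$ with the constant functions in $[X]A$ under the sup norm, use the preceding norm inequality to get contractivity, and then invoke Tomiyama's theorem (Blackadar II.6.10.2). The paper's proof is a single sentence gesturing at exactly these ingredients, whereas you have spelled out linearity, idempotency, and contractivity explicitly; your caveat about the algebraic setting required for Tomiyama is a fair point that the paper itself leaves implicit.
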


\p In the case of $A = B(\mathcal H)$, since we can determine $f\in A$ by the maps on $\mathcal H^2$ defined by $(x,y)\mapsto \langle fx, y\rangle = \overline{\langle f^*y, x\rangle}$, we can see that $\big\langle \big(\int f_t\dd t\big) x, y\big\rangle = \int \langle f_t x, y\rangle\dd t$. This yields, under a small amount of manipulation, the following useful property:
\begin{lemma}
    For appropriately measurable $f_x$, we have that 
    \begin{equation}
    \bigg(\int f_x\dd\mu\bigg)^*=\int f_x^*\dd\mu
\end{equation}
\end{lemma}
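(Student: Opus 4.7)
The plan is to reduce the operator identity to a family of scalar identities by testing against matrix coefficients, in exactly the manner indicated by the paragraph immediately preceding the lemma. To avoid the notational clash with the measure-space variable $x$, I will write $\xi, \eta \in \mathcal H$ for the test vectors throughout.

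First I would check the right-hand side is even well-defined: since adjunction is an isometric involution on $B(\mathcal H)$, the family $(f_x^*)$ is essentially bounded with the same bound as $(f_x)$, and $x \mapsto \langle f_x^* \xi, \eta\rangle = \overline{\langle f_x \eta, \xi\rangle}$ is $\mu$-measurable as the conjugate of a measurable scalar function, so $\int f_x^*\dd\mu$ lies in $B(\mathcal H)$ by the setup of Section 2. Next I would compute the $(\xi,\eta)$-matrix coefficient of each side. The matrix-coefficient formula recalled in the preceding remark gives $\langle (\int f_x\dd\mu)\eta, \xi\rangle = \int \langle f_x \eta, \xi\rangle \dd\mu$. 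Applying the defining identity $\langle T^* \xi, \eta\rangle = \overline{\langle T \eta, \xi\rangle}$ to $T = \int f_x\dd\mu$ and pulling the complex conjugation under the scalar integral (valid because $\mu$ is real-valued) yields $\langle (\int f_x\dd\mu)^* \xi, \eta\rangle = \int \langle f_x^* \xi, \eta\rangle \dd\mu$; applying the same matrix-coefficient formula now to the family $(f_x^*)$ identifies the right-hand side as $\langle (\int f_x^*\dd\mu) \xi, \eta\rangle$. Since matrix coefficients determine a bounded operator uniquely, the two operators coincide.

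There is essentially no serious obstacle here, as everything collapses to the sesquilinear definition of the adjoint combined with linearity and real-valuedness of $\mu$. The only bookkeeping to keep straight is the order of $\xi$ and $\eta$ across each application of adjunction, plus verifying measurability of the conjugated scalar functions — both of which are immediate from the framework already in place.
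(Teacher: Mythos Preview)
Your proof is correct and follows exactly the approach the paper indicates: the paper does not give a separate proof but simply notes in the preceding paragraph that operators are determined by matrix coefficients $\langle f\xi,\eta\rangle = \overline{\langle f^*\eta,\xi\rangle}$ and that the integral commutes with taking matrix coefficients, leaving the ``small amount of manipulation'' to the reader. Your write-up is precisely that manipulation, including the bookkeeping on conjugation and measurability.
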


\p The case most important to our purposes is where $X = G$ is an amenable group, which comes endowed with an invariant mean $\int\dd\mu$. This gives use a probability measure $\mu$ on $P(G)$ such that for any $g\in G, F\subset G$ we know that $\mu(F) = \mu(Fg)$. Similarly, for any bounded function $f:G\to \bC$, we have a value $\int f(x)\dd\mu(x) = \int f(xg)\dd\mu(x) \in \bC$. In this case, all our norm functions are measurable, and furthermore integrable so long as they're bounded, hence all bounded functions $G\to A$ are in $[G]A$ with respect to $\mu$.

\section{Product Preservation}
\sectionmark{Product Preservation}
\p We use similar techniques to those used in \cite{BOT, deChi} to prove the following result, which is Theorem 1.6 above: 
\begin{theorem}\label{productpreserve.thm}
    Suppose $G$ is an amenable group with mean $\int\dd g$ and $H$ is SUS with $F_H(\eps)\in O(\eps)$, then $G\times H$ is SUS with $F_{G\times H}\in O(\eps)$
\end{theorem}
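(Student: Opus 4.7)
The plan is to combine the strong Ulam stability of $H$ with the integral averaging over amenable $G$ developed in Section 2. Suppose $f\colon G \times H \to U(\mathcal{H})$ satisfies $f(1,1) = \id$ and $\delta(f) \leq \eps$. First I would observe that the restrictions $f(\cdot, 1)\colon G \to U(\mathcal{H})$ and $f(1, \cdot)\colon H \to U(\mathcal{H})$ are quasi-representations with defect at most $\eps$ that send $1$ to $\id$, and that moreover they approximately commute: since $(g,1)(1,h) = (g,h) = (1,h)(g,1)$, two applications of the defect bound give $\Vert f(g,1) f(1,h) - f(1,h) f(g,1)\Vert \leq 2\eps$ for all $g \in G$ and $h \in H$. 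Applying SUS of $H$ to $f(1, \cdot)$ then yields a representation $\sigma\colon H \to U(\mathcal{H})$ with $\Vert f(1,\cdot) - \sigma\Vert_\infty \leq F_H(\eps) \leq C_H \eps$.

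Next I would use amenability of $G$ through the integral $\int \dd g$ of Section 2. Motivated by Kazhdan's argument for amenable groups, define
\[
    \rho(g, h) = \int_G f(g_0, 1)^{-1} f(g_0 g, h) \, \dd g_0,
\]
a $B(\mathcal{H})$-valued map with $\rho(1,1) = \id$. The integral norm estimate together with the defect bound $\Vert f(g_0 g, h) - f(g_0, 1) f(g, h)\Vert \leq \eps$ (valid since $(g_0,1)(g,h) = (g_0 g, h)$) gives $\Vert \rho(g,h) - f(g,h)\Vert \leq \eps$ uniformly. A careful double-integral substitution using left-invariance of the mean combined with the defect bounds should show that $\rho$ is quasi-multiplicative up to error $O(\eps)$. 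The candidate representation is then taken to be $\tilde\rho(g, h) = \pi(g) \sigma(h)$, where $\pi\colon G \to U(\mathcal{H})$ is a representation close to $f(\cdot, 1)$ arising from Kazhdan's theorem applied to amenable $G$; by the approximate commutativity established above, $\tilde\rho$ lies within $O(\eps)$ of $f$ in $\Vert\cdot\Vert_\infty$.

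The main obstacle is ensuring that $\pi$ and $\sigma$ commute exactly, so that $\tilde\rho$ is a genuine representation of $G \times H$. They approximately commute a priori, and making the commutativity exact requires leveraging amenability of $G$ once more: the group von Neumann algebra $\pi(G)''$ is injective for $G$ amenable, so (by Connes' theorem) its commutant $\pi(G)'$ admits a normal conditional expectation from $B(\mathcal{H})$. Projecting $\sigma$ through this expectation produces an $H$-quasi-representation commuting exactly with $\pi(G)$, to which SUS of $H$ may be reapplied within $\pi(G)'$ to yield a genuine $H$-representation commuting with $\pi$. The hypothesis $F_H \in O(\eps)$ is essential throughout: each averaging or correction pass contributes an $O(\eps)$ error, and only linear decay keeps the accumulated error linear, yielding $F_{G \times H}(\eps) \leq C\eps$ for an explicit constant $C$ and hence $F_{G\times H} \in O(\eps)$.
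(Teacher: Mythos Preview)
Your proposal shares its opening moves with the paper --- restrict $f$ to $G$ and $H$ separately, obtain representations $\pi$ (via Kazhdan) and $\sigma$ (via SUS of $H$), and form the integral transform $\int f(g_0,1)^{*}f(g_0 g,h)\,\dd g_0$ --- but the final step contains a genuine gap. Projecting $\sigma$ through a conditional expectation $E\colon B(\mathcal H)\to \pi(G)'$ does produce operators commuting with $\pi(G)$, but $E$ is not multiplicative, so $E\circ\sigma$ is \emph{not} a representation; it is only a quasi-representation into $\pi(G)'$. You then propose to ``reapply SUS of $H$ within $\pi(G)'$'', but SUS of $H$ is a statement about maps into $U(\mathcal H)$ for Hilbert spaces $\mathcal H$, not into the unitary group of an arbitrary von Neumann algebra. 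Applying SUS in $U(\mathcal H)$ instead yields a representation $\sigma'$ close to $E\circ\sigma$, but $\sigma'$ has no reason to land back in $\pi(G)'$ --- you are right back where you started, with $\pi$ and $\sigma'$ only approximately commuting. One pass does not terminate the argument.

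What actually rescues the situation --- and what the paper does --- is the observation that the averaging map improves the defect \emph{quadratically}: if $Y_0$ is a genuine $H$-representation that $O(\eps)$-commutes with the $G$-representation $X$, then the averaged map $y\mapsto\int X(g)^{*}Y_0(y)X(g)\,\dd g$ has defect $O(\eps^{2})$, not merely $O(\eps)$ (this is inequality \eqref{ineq4} in the paper). One then takes the polar part, reapplies SUS of $H$ to obtain a new representation $Y_1$ with $O(\eps^{2})$ commutation error against $X$, and iterates. The defects $d_{n}$ satisfy $d_{n+1}=O(d_{n}^{2})$, so the sequence converges and the successive distances are summable, yielding a limiting genuine representation of $G\times H$. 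Your $O(\eps)$ hypothesis on $F_{H}$ is precisely what keeps the constants under control through this iteration; invoking injectivity and Connes' theorem is unnecessary, since the relevant expectation is just the explicit $G$-average, and the crucial content is the quadratic estimate rather than any abstract structure of $\pi(G)'$.
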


\begin{proof}
    Let $\mathcal H$ be Hilbert and $U = U(\mathcal H)$, and let $f:G\times H\to U$ be a function such that $f(1,1)=\id$. We define $f'(x,y)$ for each $(x,y)$ as the integral
    \begin{equation}
        f'(x,y)=\int f(g,1)^*f(gx,y)\dd g \in B(\mathcal H).
    \end{equation}
    Since $\int\dd g$ is a conditional expectation (and, further, behaves as expected with norms), we know that these integrals are bounded in norm by 1 and $\int f(x,y)\dd g = f(x,y)$ for each $(x,y)$, and as such
    \begin{align*}
        \Vert f'(x,y)-f(x,y)\Vert &=\bigg\Vert\int f(g,1)^*f(gx,y) - f(x,y)\dd g\bigg\Vert\\
        &= \bigg\Vert\int f(g,1)^*f(gx,y) - f(g,1)^*f(g,1)f(x,y)\dd g\bigg\Vert\\
        &= \bigg\Vert\int f(g,1)^*(f(gx,y)-f(g,1)f(x,y))\dd g\bigg\Vert\\
        &\leq \int \Vert f(g,1) ^*(f(gx,y)-f(g,1)f(x,y))\Vert\dd g\\
        &\leq \int \Vert f(g,1)\Vert \Vert f(gx,y)-f(g,1)f(x,y)\Vert\dd g\\
        &\leq \int \Vert f(gx,y)-f(g,1)f(x,y)\Vert \dd g\\
        &\leq \delta(f).
    \end{align*}
    Since $\lVert \int (f(gx,y)-f(g,1)f(x,y))^*(f(gx',y')-f(g,1)f(x',y'))\dd g\rVert \leq \delta(f)^2$, and by expanding out this integral we find that
    \begin{align*}
       & \int (f(gx,y)-f(g,1)f(x,y))^*(f(gx',y')-f(g,1)f(x',y'))\dd g\\
       &= \int f(gx,y)^*f(gx',y')-f(gx,y)^*f(g,1)f(x',y')\\& \quad\quad-f(x,y)^*f(g,1)^*f(gx',y')+f(x,y)^*f(g,1)^*f(g,1)f(x',y')\dd g\\
       &=\int f(gx,y)^*f(gx',y')\dd g - \bigg(\int f(g,1)^*f(gx,y)\dd g\bigg)^*f(x',y')\\
       &\quad\quad -f(x,y)^*\int f(g,1)^*f(gx',y')\dd g+f(x,y)^*f(x',y')\\
       &= \int f(gx,y)f(gx'y')\dd g- f'(x,y)^*f(x',y')-f(x,y)^*f'(x',y') +f(x,y)^*f(x',y')^*\\
       &= \int f(gx,y)^*f(gx',y')\dd g - f'(x,y)^*f'(x',y') + (f(x,y)-f'(x,y))^*(f(x',y')-f'(x',y')).
    \end{align*}
    \newline
    So, by triangle inequality, 
    \begin{equation}\label{ineq4}
        \bigg\Vert \int f(gx,y)^*f(gx',y')\dd g - f'(x,y)^*f'(x',y')\bigg\Vert \leq 2\delta(f)^2
    \end{equation}
    as stated in \cite{BOT}, and in the case $y=1$, we recover some of the statements made in that paper immediately via the manipulations which follow.
    
    \p Manipulating \eqref{ineq4} using amenability as $\int \dd g$ is  $G$-invariant, we replace $g$ with $gx^{-1}$ to show $\int f(gx,y)^*f(gx',y')\dd g= \int f(g,y)^*f(gx^{-1}x',y')\dd g$, from which we see
    \begin{align*}
        &\bigg\Vert\int f(gx,y)^*f(gx',y')\dd g -f(1,y)^*f'(x^{-1}x',y')\bigg\Vert \\
        =& \bigg\Vert\int f(g,y)^*f(gx^{-1}x',y')\dd g - f(1,y)^*\int f(g,1)^*f(gx^{-1}x',y')\dd g\bigg\Vert\\
        =& \bigg\Vert\int f(g,y)^*f(gx^{-1}x',y') - f(1,y)^*f(g,1)^*f(gx^{-1}x',y')\dd g\bigg\Vert\\
        =& \bigg\Vert\int (f(g,y)^* - f(1,y)^*f(g,1)^*)f(gx^{-1}x',y')\dd g\bigg\Vert\\
        \leq& \int \Vert (f(g,y) - f(g,1)f(1,y))^*f(gx^{-1}x',y')\Vert\dd g\\
        \leq& \int \Vert (f(g,y) - f(g,1)f(1,y))\Vert \dd g\\
        \leq& \sup_{a,b} \Vert f(a,b) - f(a,1)f(1,b)\Vert \coloneqq \delta_s(f).
    \end{align*}
    \p In the case $x=x',y=y'$, we see via application of the above inequalities and triangle inequality, provided that $\delta(f)$ is sufficiently small, that
    \begin{equation*}
        (1-2\delta(f)^2-\delta_s(f))I \leq f'(x,y)^* f'(x,y) \leq I
    \end{equation*}
    and hence $f'(x,y), |f'(x,y)|$ are invertible (and $\vert f'(x,y)\vert$ also satisfies the same inequality). We define $f_1(x,y) = f'(x,y)\vert f'(x,y)\vert^{-1}$, which is unitary by polar decomposition. By some algebraic manipulations, we see that 
    \begin{align}
        \big\Vert f_1(x,y) - f'(x,y)\big\Vert &= \big\Vert f'(x,y)\vert f'(x,y)\vert^{-1} - f'(x,y)\big\Vert \nonumber\\ 
        &= \big\Vert f'(x,y)\vert f'(x,y)\vert^{-1} - f'(x,y)\vert f'(x,y)\vert^{-1}\vert f'(x,y)\vert\big\Vert \nonumber\\
        &=\big\Vert \big[f'(x,y)\vert f'(x,y)\vert^{-1}\big]\big[I-\vert f'(x,y)\vert\big]\big\Vert\nonumber\\
        &\leq \big\Vert I-\vert f'(x,y)\vert\big\Vert. \label{normalized}
    \end{align}
    \p Since $\langle At,t\rangle$ is linear in $A$ and $\Vert A\Vert = \sup_{\Vert t\Vert \leq 1} \langle At,t\rangle$, we can utilize the inequality \eqref{normalized} directly above to bound the norm by $2\delta(f)^2+\delta_s(f)$.
    \p Combining these bounds, we see that $\Vert f(x,y)-f_1(x,y)\Vert \leq \delta(f)+\delta_s(f)+2\delta(f)^2$. Next, we want to control the defect of $f'$ and $f_1$ in terms of that of $f$. We know some partial bounds for the defect of $f'$ from our above manipulations,
    \begin{equation}
        \Vert f'(x'x,y) - f'(x',1)f'(x,y)\Vert \leq 2 \delta(f)^2
    \end{equation}
    so it suffices to bound some partial defects of $f'$, such as the difference between $f'(1,y)^*$ and $f'(1,y^{-1})$, in order to determine strong bounds on $\delta(f')$. First, we consider the defect localized to $H$.
    \begin{align}
        \Vert f'(1,y) - f(1,y)\Vert &= \bigg\Vert \int f(g,1)^*f(g,y) - f(1,y)\dd g\bigg\Vert\nonumber\\
        &= \bigg\Vert \int f(g,1)^*(f(g,y) - f(g,1)f(1,y))\dd g\bigg\Vert\nonumber\\
        &\leq \int \Vert f(g,1)^*(f(g,y) - f(g,1)f(1,y)) \Vert\dd g\nonumber\\
        &\leq \int \Vert f(g,y) - f(g,1)f(1,y) \Vert\dd g\nonumber\\
        &\leq \delta_s(f) \label{Hbounding1}
    \end{align}
    \p If we define an error term localized to $H$ by
    \begin{equation}
        \delta_H(f) = \sup_{y,y'}\Vert  f(1,yy') - f(1,y)f(1,y')\Vert
    \end{equation}
    We can apply the bound from \eqref{Hbounding1} repeatedly to relate $\delta_H(f')$ to that of $f$, and we arrive at the following helpful estimate:
    \begin{equation}
        \delta_H(f') \leq 3\delta_s(f) + \delta_H(f).
    \end{equation}
    \p The last term to bound is $\delta_c(f') \coloneqq \sup_{x,y}\Vert f'(x,1)f'(1,y)-f'(1,y)f'(x,1)\Vert$. We claim this is sufficient, as for any $f$ we can bound 
    \begin{equation}\label{ineqpartition.eq}
        \delta(f)\leq \delta_G(f)+\delta_H(f)+\delta_c(f)+3\delta_s(f),
    \end{equation}
    where $\delta_G$ is defined analogously to $\delta_H$, which is trivial to verify by repeated triangle inequality applications but provides a very useful partition localizing error behavior.
    
    \p Putting this all together: by a result of Kazhdan \cite{Kaz}, we know that $G$ is SUS with $F_G(\eps) \leq 2\eps \in O(\eps)$, and by assumption $H$ is SUS with $F_H(\eps) \leq c\eps \in O(\eps)$ for all sufficiently small $\eps > 0$. Suppose $f_0:G\times H \to U$ satisfies $f(1,1)=I$ and $\delta(f) = \eps$. Let $X: G\to U$ and $Y_0:H\to U$ be unitary representation approximations guaranteed by SUS, and let ${_0f}(x,y) = Y_0(y)X(x)$. By iterated triangle inequality applications, we see ${_0f}$ is within $(3+c)\eps$ of $f_0$, and hence $\delta({_0f})\leq (10+3c)\eps$ by Proposition \ref{prop2}.
    \p Taking ${_0f}'$ as above, we see this is within $(13+4c)\eps$ of $f_0$. Further, we see that, since $X(gx) = X(g)X(x)$ and $X(g)^* = X(g^{-1})$, we have that
    \begin{align*}
        {_0f}'(x,1){_0f}'(1,y) &= \int X(t)^*Y_0(1)X(tx)\dd t\int X(g)^*Y_0(y)X(g)\dd g\\
        &= \int X(t)^*X(t)X(x)\dd t\int X(g)^*Y_0(y)X(g)\dd g\\
        &= \bigg(\int X(t^{-1}t)\dd t\bigg) X(x)\int X(g)^*Y_0(y)X(g)\dd g\\
        &= \int X(x^{-1})^*X(g)^*Y_0(y)X(g)\dd g\\
        &= \int X(gx^{-1})^*Y_0(y)X(g)\dd g\\
        &= \int X(g)^*Y_0(y)X(gx)\dd g = {_0f}'(x,y).
    \end{align*}
    \p Only the last line relied on amenability, and we see that by similar reasoning to the first line that ${_0f}'(x,1) = X(x)$ and ${_0f}'(x,y) = {_0f}'(1,y)X(x)$. From this, we know that $\delta_G, \delta_s, \delta_c$ all evaluate to $0$ for ${_0f}'$. So, by \eqref{ineqpartition.eq} we know that $\delta = \delta_H$ in this case, and we have strong control on this from \eqref{ineq4}, as we can directly simplify
    \begin{align*}
        \int (Y_0(y)X(gx))^*Y_0(y')X(gx')\dd g &= \int X(gx)^*Y_0(y)^*Y_0(y')X(gx')\dd g\\
        &= \int X(gx)^*Y_0(y^{-1})Y_0(y')X(gx')\dd g\\
        &= \int X(g)^*Y_0(y^{-1}y')X(gx^{-1}x')\dd g\\
        &= {_0f}'(x^{-1}x', y^{-1}y').
    \end{align*}
    So, from this, we see that
    \begin{equation}
        \delta({_0f}') \leq 2(10+3c)^2\eps^2
    \end{equation}
    and we acquire similar operator bounds as above, finding $(1-2(10+3c)^2\eps^2)I \leq {_0f}'(x,y){_0f}'(x,y)\leq I$ by our above simplification. Since this same bound holds for $|{_0f}|$, we can bound $f_1 = {_0f}'|{_0f}'|^{-1}$ within $2(10+3c)^2\eps^2$ of ${_0f}'$, so we know
    \begin{equation}
        \delta(f_1) \leq 8(10+3c)^2\eps^2.
    \end{equation}
    We can now iterate this process starting from $f_1$ and this defect (noting that $X$ is unchanged as we progress can reduce the growth in coefficient size, if desired).
    \p This clearly has the defect reducing to zero, and since the distance between each successive term is of this form, we merely need to verify the summability of these successive distances. To simplify computations, we note it would be sufficient to verify summability starting from the assumption that $f_0 = {_0f}$, and therefore neglecting terms coming from $X$, as we have seen by a usage of the triangle inequality and Proposition \ref{prop2} that this gives us a function of the appropriate decay near 0 to take this single step.
    \p Making use of this simplification and some easy calculations, the distance from $f_0$ to $f_1$ is bounded by $\eps + 2\eps^2$ and $\delta(f_1)\leq 8 \eps^2$. From $f_1$ to ${_1f}$ is bounded by $8c\eps^2$ with $\delta({_1f}) \leq (8+24c)\eps^2 \leq 32c\eps^2$ when $1 \leq c$, which we can freely assume. So, from here, the distance ${_1f}$ to $f_2$ is bounded by $32c\eps^2 + (32c\eps^2)^2$, so $f_1$ to $f_2$ is bounded by $40c\eps^2 + (32c\eps^2)^2$, and similar patterns carry forward.
    \p The defect $d_{n+1}$ of $f_{n+1}$ is $8(4cd_n)^2 = 128c^2 d_n^2$, and the distance $i_n$ from $f_n$ to $f_{n+1}$ is $5cd_n + 16c^2d_n^2$ after the pattern begins from chaining the distances from $f_n$ to ${_nf}$ to ${_nf}'$ and finally $f_{n+1}$. Since we can determine $d_n = (128c)^{2^{n+1}-1}8^{2^n}\eps^{2^{n+1}}$ we can further compute that
    \begin{equation*}
        i_n = 5cd_n + \frac{1}{8} d_{n+1}
    \end{equation*}
    and so, for sufficiently small $\eps$ relative to $c$, by series comparison we have
    \begin{equation}
        \sum^\infty i_n \leq \sum^\infty 6cd_n \leq 768c\eps\sum^\infty (1024c\eps)^n < \infty,
    \end{equation}
    with the desired decay near 0 quickly shown to hold via evaluation of this geometric series. Since this is uniform in $\mathcal H$, this bounds $F_{G\times H}$, which must be continuous near 0.
\end{proof}

\p Using the estimates of the above theorem and replacing $c\eps$ with $c\eps^s$ where appropriate, as the summation at the end is all that changes, the rate of decay in the defect $4cd_n^s$ of ${_nf}$ and the distance bounds $i_n = 5cd_n^s+(4cd_n^s)^2$ is controlled, assuming $1 \leq c$ and $s \leq 1$, as $2s > 1$. This exponential decay implies summability, yielding the following result.
\begin{corollary}\label{3.13}
    The above theorem holds when $O(\eps)$ is replaced by $O(\eps^s)$ for any $s>\frac 1 2$.
\end{corollary}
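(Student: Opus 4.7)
The plan is to re-run the iterative construction in the proof of Theorem~\ref{productpreserve.thm} verbatim, changing only the bound used to approximate the $H$-factor at each stage. Every time the proof invokes $F_H(\eps)\leq c\eps$ to extract a unitary representation $Y_n\colon H\to U(\mathcal H)$ approximating $f_n(1,\cdot)$, I would instead invoke $F_H(\eps)\leq c\eps^s$. Every other ingredient — the integral transform $f\mapsto f'$, the polar normalization, the splitting of $\delta$ via the localized defects $\delta_G,\delta_H,\delta_c,\delta_s$ and their vanishing after integration, and the $2\delta(f)^2$-estimate \eqref{ineq4} — uses only $G$-amenability and the triangle inequality, so these transfer unchanged.

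Writing $d_n = \delta(f_n)$ and $i_n = \|f_n - f_{n+1}\|_\infty$ with $d_0 = \eps$, one iteration breaks into three steps. Approximating $f_n(1,\cdot)$ by $Y_n$ costs $cd_n^s$, so the hybrid ${_nf} = Y_n X$ lies within a constant multiple of $d_n^s$ of $f_n$ and, by Proposition~\ref{prop2}, has defect at most $4cd_n^s$ (assuming, as we may, $c\geq 1$ and $d_n<1$). The same symmetry computation that produced $\delta({_0f}')\leq 2(10+3c)^2\eps^2$ in the theorem now gives $\delta({_nf}')\leq 2(4cd_n^s)^2$, and polar normalization contributes another $(4cd_n^s)^2$-term to the distance, so
\begin{equation*}
d_{n+1} \;\leq\; 128c^{2}d_n^{2s}, \qquad i_n \;\leq\; 5cd_n^{s} + 16c^{2}d_n^{2s},
\end{equation*}
exactly the estimates indicated in the paragraph preceding the corollary.

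The decisive hypothesis $2s>1$ enters here. Setting $C=128c^2$ and $e_n=C^{1/(2s-1)}d_n$ converts the recursion into $e_{n+1}\leq e_n^{2s}$, whence $e_n\leq e_0^{(2s)^n}$; thus $d_n$ (and a fortiori $d_n^s$) decays doubly-exponentially provided $e_0<1$, i.e.\ for $\eps$ below a threshold depending only on $c$ and $s$. Summing yields $\sum_n i_n \leq 6c\sum_n d_n^s \in O(\eps^s)$, because the first term dominates the geometric tail. The Cauchy sequence $f_n$ then converges uniformly to a map $\rho$ with $\delta(\rho)=\lim_n d_n = 0$; in particular $\rho$ is a genuine unitary representation of $G\times H$, and $\|f_0-\rho\|_\infty \in O(\eps^s)$. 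Since all bounds are uniform in $\mathcal H$, $F_{G\times H}(\eps)\in O(\eps^s)$.

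The main obstacle is purely bookkeeping: verifying that the accumulated prefactor $C^{((2s)^n-1)/(2s-1)}$ in the closed-form bound on $d_n$ is outpaced in the series $\sum d_n^s$ by the compensating shrinkage in $\eps$, which reduces to the single explicit smallness condition $Cc^2\eps^{s(2s-1)}<1$. Beyond this geometric check no new ideas are needed; the argument degenerates exactly to the proof of Theorem~\ref{productpreserve.thm} when $s=1$.
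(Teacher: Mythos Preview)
Your proposal is correct and follows the same route as the paper: re-run the iteration of Theorem~\ref{productpreserve.thm} with $c\eps$ replaced by $c\eps^{s}$, obtain the recursion $d_{n+1}\leq 128c^{2}d_n^{2s}$ and $i_n\leq 5cd_n^{s}+16c^{2}d_n^{2s}$, and invoke $2s>1$ for summability. The paper's own argument is the single paragraph preceding the corollary and leaves the summability to the reader; your explicit rescaling $e_n=C^{1/(2s-1)}d_n$ is exactly the bookkeeping that makes that paragraph rigorous.
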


\begin{corollary}\label{3.14}
    For a split extension $0 \to G \to Q \to H \to 0$ with associated map $a:H\to \Aut(G)$ such that $G$ is amenable, $H$ is SUS with $F_H\in O(\eps^s)$ for some $s>\frac 1 2$, and $a[H]\leq \Aut(G)$ is an amenable subgroup, we have that $Q$ is SUS.
\end{corollary}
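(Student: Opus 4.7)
The plan is to adapt the proof of Theorem~\ref{productpreserve.thm} and Corollary~\ref{3.13} to the semidirect product setting $Q = G \rtimes_a H$. Since $a(1_H) = \id_G$, the identity $(g, 1)(x, y) = (gx, y)$ still holds in $Q$, so the $G$-averaging $f'(x, y) = \int f(g, 1)^* f(gx, y)\, \dd g$ remains well-defined, and the three preliminary estimates---$\Vert f - f'\Vert_\infty \leq \delta(f)$, the quadratic bound on $f'(x, y)^* f'(x', y')$, and the $\delta_s(f)$ bound involving $f(1, y)^* f'(x^{-1} x', y')$---transfer verbatim. The polar normalization $f_1 = f' \vert f'\vert^{-1}$ likewise goes through unchanged.

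Two structural modifications are needed. First, because $(1, y)(x, 1) = (a(y)(x), y)$ rather than $(x, y)$ in $Q$, the commutator-type defect must be taken to be $\delta_c^{(Q)}(f) = \sup_{x, y} \Vert f(a(y)(x), 1) f(1, y) - f(1, y) f(x, 1)\Vert$, with the partition inequality $\delta(f) \leq \delta_G(f) + \delta_H(f) + \delta_c^{(Q)}(f) + 3\delta_s(f)$ following from the same triangle-inequality argument as \eqref{ineqpartition.eq}. Second, the factorization $(x, y) = (1, y)(a(y)^{-1}(x), 1)$ dictates the twisted initial approximant
\begin{equation*}
    {_0f}(x, y) = Y_0(y)\, X(a(y)^{-1}(x)),
\end{equation*}
where $X \colon G \to U$ and $Y_0 \colon H \to U$ come from Kazhdan's theorem for $G$ and SUS of $H$, respectively. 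A triangle inequality argument gives $\Vert{_0f} - f\Vert_\infty \leq (3 + c)\eps^s$ for small $\eps$, hence $\delta({_0f}) \leq (10 + 3c)\eps^s$ via Proposition~\ref{prop2}.

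The role of amenability of $a[H]$ is to supply a mean $\mu_G$ on $G$ that is both $G$-invariant and $a[H]$-invariant, obtained by Day's fixed-point theorem applied to $a[H]$ acting on the weak-$*$ compact convex set of $G$-invariant means. Using only $G$-invariance, the substitution $g \mapsto g \cdot a(y)(h)$ in the integral defining ${_0f}'(1, y) X(h)$ yields the exact intertwining identity ${_0f}'(1, y)\, X(h) = X(a(y)(h))\, {_0f}'(1, y)$, from which ${_0f}'(x, y) = {_0f}'(1, y)\, X(a(y)^{-1}(x)) = X(x)\, {_0f}'(1, y)$; consequently $\delta_G({_0f}') = \delta_s({_0f}') = \delta_c^{(Q)}({_0f}') = 0$. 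The bound $\delta_H({_0f}') \leq 2\delta({_0f})^2$ then follows from identifying $\int {_0f}(g, y)^* {_0f}(g, y')\, \dd g = {_0f}'(1, y^{-1} y')$ via the substitution $g \mapsto a(y)(g)$, which is where the $a[H]$-invariance of $\mu_G$ is genuinely needed (along with the related symmetry ${_0f}'(1, y^{-1})^* = {_0f}'(1, y)$).

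The remaining iterative steps---polar normalization $f_{n+1} = {_nf}' \vert {_nf}'\vert^{-1}$, bookkeeping of distances between successive iterates, and summation of the resulting super-exponentially decaying series (convergent since $s > \tfrac{1}{2}$)---proceed exactly as in Corollary~\ref{3.13}. The principal obstacle is the $a[H]$-invariance of the mean needed for the quadratic reduction of $\delta_H({_0f}')$: without this, the identification $\int {_0f}(g, y)^* {_0f}(g, y')\, \dd g = {_0f}'(1, y^{-1} y')$ fails, the near-intertwining arising in $\delta_c^{(Q)}({_0f}')$ and $\delta_s({_0f}')$ no longer vanishes exactly, and the iteration no longer shrinks the defect at the required rate.
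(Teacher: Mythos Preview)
Your proof is correct and shares the paper's overall strategy: partition the defect as in \eqref{ineqpartition.eq}, average over $G$ with a mean invariant under both $G$-translation and the $a[H]$-action so that $\delta_G,\delta_s,\delta_c$ vanish on the transformed approximant, obtain quadratic control on the remaining $\delta_H$, and iterate as in Corollary~\ref{3.13}. The implementation differs in a few places worth noting. You reuse the transform $f'(x,y)=\int f(g,1)^*f(gx,y)\,\dd g$ of Theorem~\ref{productpreserve.thm} verbatim, paired with the twisted initial approximant ${_0f}(x,y)=Y_0(y)X(a(y)^{-1}(x))$ coming from the factorization $(x,y)=(1,y)(a(y)^{-1}(x),1)$; the paper instead introduces a new transform $\hat f(x)=\int_G f(g(x)t)\,f(\bar x)\,f(t^{\bar x})^*\,\dd t$ tailored to the factorization $x=g(x)\bar x$ and takes $f_0=\rho_G(g(x))\rho_{0,H}(\bar x)$. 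Your route has the advantage that the preliminary estimates $\Vert f-f'\Vert\le\delta(f)$ and \eqref{ineq4} carry over from Theorem~\ref{productpreserve.thm} without modification, and the intertwining ${_0f}'(1,y)X(h)=X(a(y)(h)){_0f}'(1,y)$ drops out of a single right-translation in $G$; the paper's transform makes the conjugation action explicit in the integrand at the cost of redoing those estimates. Finally, you appeal to Day's fixed-point property to produce the $G$- and $a[H]$-invariant mean, whereas the paper writes down the convolution $\mu^{*\nu}(X)=\int_{a[H]}\mu(a(h)[X])\,\dd\nu(a(h))$ explicitly; both are standard and equivalent here.
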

\begin{proof}
    \p Suppose $0\to G \overset\iota\to Q \overset q\to H \to 0$ is a split extension, with section $s:H\to Q$, where $H$ is SUS with $F(\eps)$ as above and $G$ is amenable. For any $x\in Q$, we define $\bar x = s(q(x))$ and $g(x) = x \bar x^{-1} \in \iota[G]$. Much as in Theorem \ref{productpreserve.thm}, we define our partitioning of error as follows:
    \begin{align*}
        \delta_H(f) &= \sup_{x,y\in H} \Vert f(s(xy)) - f(s(x))f(s(y))\Vert\\
        \delta_G(f) &= \sup_{x,y\in G}\Vert f(\iota(xy)) - f(\iota(x))f(\iota(y))\Vert \\
        \delta_s(f) &= \sup_{x\in Q} \Vert f(x) - f(g(x))f(\bar x)\Vert \\
        \delta_c(f) &= \sup_{x\in Q} \Vert f(\bar x)f(g(x)^{\bar x}) - f(g(x))f(\bar x)\Vert
    \end{align*}
    and we note that \eqref{ineqpartition.eq} holds here as well by similar arguments. As above, for $f:Q\to U(\mathcal H)$, we define an integral transform by
    \begin{equation}
        \hat f(x) = \int_G f(g(x)t)f(\bar x)f(t^{\bar x})^*\dd t.
    \end{equation}
    \p Invoking our hypotheses, we have representations $\rho_{0,H}, \rho_G$ on $H, G$ resp. within our bounds, and we define $f_0(x) = \rho_G(g(x))\rho_{0,H}(\bar x)$. For the same reasons reasons as in Theorem \ref{productpreserve.thm}, we know $\delta_G,\delta_H,\delta_s$ are all 0. Further, we note that $\hat{f}_0$ agrees with $f_0$ on all of $\iota[G]$. For $\delta_c$, we see it is 0 as
    \begin{align*}
        \hat f_0(x) &= \int \rho_G(g(x)t)\rho_{0,H}(\bar x) \rho_
        G(t^{\bar x})^*\dd t\\
        &= \int \rho_G(g(x)g(x)^{-1}t)\rho_{0,H}(\bar x) \rho_
        G((g(x)^{-1}t)^{\bar x})^*\dd t\\
        &= \int \rho_G(t)\rho_{0,H}(\bar x) \rho_
        G((t^{-1}g(x))^{\bar x})\dd t\\
        &= \bigg(\int \rho_G(t)\rho_{0,H}(\bar x) \rho_
        G(t^{\bar x})^* \dd t\bigg) \rho_G(g(x)^{\bar x})\\
        &= \hat f_0(\bar x) \hat f_0(g(x)^{\bar x}).
    \end{align*}
    \p From this, we can immediately see $\delta(\hat f_0) = \delta_H(\hat f_0)$, further we can bound $\Vert \hat f_0 - f_0\Vert \leq \delta(f_0)$ by one application of $\delta_c$, and for any function $F$ we have $\Vert \hat F - F\Vert \leq 3\delta(F)$ by similar applications of triangle inequality. To attempt to control $\delta_H$ as in \eqref{ineq4}, we act similarly: we expand out for $x,y \in H$
    \begin{align*}
        &\int (f_0(x) - f_0(gx)f_0((g^{-1})^x))^*(f_0(y) - f_0(gy)f_0((g^{-1})^y)) \dd g\\
        =& \int f_0(x)^*f_0(y) - f_0(x)^*f_0(gy)f_0((g^{-1})^y)\\
        &- f_0((g^{-1})^x)^*f_0(gx)^*f_0(y) + f_0((g^{-1})^x)^*f_0(gx)^*f_0(gy)f_0((g^{-1})^y) \dd g\\
        =& f_0(x)^*f_0(y) - f_0(x)^*\hat f_0(y)\\
        &- \hat f_0(x)^* f_0(y) + \int f_0((g^{-1})^x)^*f_0(gx)^*f_0(gy)f_0((g^{-1})^y)\dd g\\
        =& (\hat f_0(x)-f_0(x))^*(\hat f_0(y)-f_0(y))-\hat f_0(x)^*\hat f_0(y)\\
        &+ \int f_0((g^{-1})^x)^*f_0(gx)^*f_0(gy)f_0((g^{-1})^y)\dd g
    \end{align*}
    which similarly has norm control of $\delta(f_0)^2$, and hence 
    \begin{equation}\label{theint}
        \bigg\Vert \int f_0((g^{-1})^x)^*f_0(gx)^*f_0(gy)f_0((g^{-1})^y)\dd g - \hat f_0(x)^*\hat f_0(y)\bigg\Vert \leq 2 \delta(f_0)^2.
    \end{equation}
    \p To finalize the proof, we first manipulate the integral from \eqref{theint} in a formal computation where we assume that we can freely conjugate the variable of integration
    \begin{align*}
        &\int f_0((t^{-1})^x)^*f_0(tx)^*f_0(ty)f_0((t^{-1})^y)\dd t \\
        =& \int \rho_G(t^x) \rho_{0,H}(x^{-1})\rho_{G}(t^{-1}t)\rho_{0,H}(y)\rho_G(t^y)^*\dd t\\
        =& \int \rho_G(t^x) \rho_{0,H}(x^{-1}y)\rho_G(t^y)^*\dd t\\
        =& \int \rho_G(t) \rho_{0,H}(x^{-1}y)\rho_G(t^{x^{-1}y})^*\dd t\\
        =& \hat{f}_0(x^{-1}y).
    \end{align*}
    \p Though this does assume we have invariance of $\int\dd t$ under conjugation, we can similarly utilize such a manipulation and invariance under inversion of $t$ (which can always be done over amenable $G$) to prove $\hat{f}_0(x)^* = \hat{f}_0(x^{-1})$. 
    \p These manipulations rely on an invariance of our measure under the action of $H$ factoring through a map $a:H\to \Aut(G)$ induced by conjugation in $Q$, but we know that the image $a[H]$ is amenable by hypothesis. Utilizing this, we define the following mean:
    \begin{equation*}
        \mu^{*\nu}(X) = \int_{a[H]} \mu(a(x)[X])\dd \nu(a(x))
    \end{equation*}
    where $\mu$ refers to our mean on $G$ and $\nu$ to our mean on $a[H]\leq \Aut(G)$, which exists by hypothesis. That this is an invariant mean under the $G\rtimes a[H]$ action is trivial to verify by utilizing that $a(x)[gX] = a(x)(g)\cdot a(x)[X]$ and similar such expressions, and we can then apply the same estimates and convergence from the end of the proof of Theorem \ref{productpreserve.thm}. This yields the desired representation, and proves the bounds on SUS for $G\rtimes_a H \cong Q$, much as in Corollary \ref{3.13}
\end{proof}

\subsection{Remark}\label{3.17}
    We note that the above theorems do not make essential use of the SUS property but rather only stability estimates particular to $U(\mathcal H)$, which implies the theorem by working fiberwise. Because of this, these proofs imply a marginally stronger result about preserving relevant asymptotic estimates at each dimension individually even without the uniformity across all by strong Ulam stability.

\section{Inductivity and Logic}

\p In this section, we prove a few results in pursuit of inductivity of the class of SUS groups, including some additional lemmas on existence of some invariant measures on ordinals which respect the order structure. This section also contains the logical relevance of such results, as well as Theorem \ref{productpreserve.thm}, in terms of existential closures as well as quantitative control on estimates. We first begin with a definition of what we mean by inductivity. 
\begin{definition}[Inductivity]
    We say a class of structures $\mathcal{K}$ is \emph{inductive} if, for any linear order $I$ and any chain of structures $(M_i)_{i\in I}$ such that $i\leq j$ implies $M_i \leq M_j$ as substructures and each $M_i$ is in $\mathcal{K}$, we have that $M=\bigcup_{i\in I} M_i$ is in $\mathcal{K}$.
\end{definition}

To that end, we note that for any linear order $I$, we can replace it with a cofinal well order $\kappa$, so it is sufficient to prove this property over regular cardinals. Further, since the proper inclusions will either be cofinal or the chain will stabilize, in which case the result is trivial, we can assume going forward that $i < j$ implies $G_i < G_j$.
\p Given a sequence of ``nice" maps $f_i:G_i \to B(\mathcal H)$ with a uniform bound, the most obvious way to get a ``nice" map $f:G \to B(\mathcal H)$ is through $f = \int f_i \dd\mu$ for an appropriate measure $\mu$ after extending each $f_i$ over $G$. The obvious question at that point is: how poorly behaved can this integral be despite our best efforts? That is, how tightly can we control defect terms in terms of those of the $f_i$ and how the defects vary as we increase $i$?

\begin{definition}[Cofinal measure]
    We call a finitely additive measure $\mu$ on $\kappa$ cofinal if $\mu(\{i|i \leq \alpha\})=0$ for every $\alpha < \kappa$, in particular the case where $\mu$ is positive.
\end{definition}

\begin{proposition}\label{asympdef}
    Given a $\kappa$-sequence of unitary representations $f_i:G_i \to U(\mathcal H)$ for a $\kappa$-chain $(G_i)$ and a cofinal probability measure $\mu$ on $\kappa$, we have that
    \begin{equation*}
        \delta\bigg(\int f_i\dd\mu\bigg) \leq \inf_{\alpha<\kappa}\sup_{\alpha < i<j<\kappa}\sup_{x\in G_i} \Vert f_i(x)-f_j(x)\Vert.
    \end{equation*}
\end{proposition}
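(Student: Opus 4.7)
The plan is to bound, for each $x, y \in G$ and each $\eps > 0$, the quantity $\Vert f(xy) - f(x)f(y)\Vert$ by $C + \eps$, where $f = \int f_i\,\dd\mu$ and $C$ denotes the claimed infimum. First I choose $\alpha < \kappa$ large enough that $x, y \in G_\alpha$ and that $\sup_{\alpha < i < j < \kappa} \sup_{z \in G_i}\Vert f_i(z) - f_j(z)\Vert \leq C + \eps$, which is possible since the inner sup is monotone decreasing in $\alpha$. Cofinality of $\mu$ then gives $\mu(\{i : i \leq \alpha\}) = 0$, so the product measure $\mu \times \mu$ is supported on $\{(i,k) : i, k > \alpha\}$.

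The crucial step is to expand $\langle f(x) f(y) \xi, \eta\rangle$ for unit vectors $\xi, \eta \in \mathcal H$ as a genuine double integral via the predual-based definition of $\int\dd\mu$. Writing $\langle f(x) f(y)\xi, \eta\rangle = \langle f(y)\xi, f(x)^*\eta\rangle = \int \langle f(x) f_i(y) \xi, \eta\rangle \dd\mu(i)$ and then unfolding the inner $f(x)$ yields $\langle f(x) f(y) \xi, \eta\rangle = \int\!\int \langle f_k(x) f_i(y) \xi, \eta\rangle \dd\mu(k) \dd\mu(i)$. Meanwhile $\langle f(xy)\xi, \eta\rangle = \int\!\int \langle f_i(xy) \xi, \eta\rangle \dd\mu(k) \dd\mu(i)$ by inserting the trivial factor $\mu(\kappa) = 1$ in the $k$-integral. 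Subtracting, and using that $f_i$ is a representation of $G_i \supseteq G_\alpha$ whenever $i > \alpha$, the integrand on the full-measure tail simplifies to $\langle [f_i(x) - f_k(x)] f_i(y) \xi, \eta\rangle$, with absolute value at most $\Vert f_i(x) - f_k(x)\Vert$. The choice of $\alpha$ (together with the symmetry $\Vert f_i(x) - f_k(x)\Vert = \Vert f_k(x) - f_i(x)\Vert$ and the fact that $x \in G_{\min(i,k)}$ once $\min(i,k) > \alpha$) bounds this by $C + \eps$. Taking the sup over unit $\xi, \eta$ gives the operator-norm bound, and letting $\eps \to 0$ concludes.

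The main obstacle I anticipate is resisting the naive triangle-inequality route $\Vert f(xy) - f_j(xy)\Vert + \Vert f_j(x) f_j(y) - f(x) f(y)\Vert$ for a single tail index $j$: expanding the second term by Leibniz introduces two factors of $\Vert f - f_j\Vert$ and yields only the weaker bound $\delta(f) \leq 3C$. The double-integral representation avoids this loss precisely because the diagonal $\{i = k\}$ contributes zero automatically (since $f_i(xy) = f_i(x) f_i(y)$), so the single off-diagonal discrepancy $\Vert f_i(x) - f_k(x)\Vert$ is the only quantity that needs to be estimated on the tail.
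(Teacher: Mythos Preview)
Your proof is correct and follows essentially the same route as the paper's: both write $f(xy)-f(x)f(y)$ as an iterated double integral over $\mu$, use that each $f_i$ is a genuine representation on the cofinal tail to replace $f_i(xy)$ by $f_i(x)f_i(y)$, factor out a unitary so that only the single discrepancy $\Vert f_i - f_k\Vert$ survives, and then invoke cofinality of $\mu$ to restrict both indices past $\alpha$. The only cosmetic differences are that the paper works directly with the operator-valued norm inequality $\bigl\Vert\int f_x\,\dd\mu\bigr\Vert\le\int\Vert f_x\Vert\,\dd\mu$ while you unfold everything at the level of inner products $\langle\cdot\xi,\eta\rangle$, and that you fix $\alpha$ with $x,y\in G_\alpha$ at the outset whereas the paper applies cofinality at the end. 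One small wording caution: you refer to ``the product measure $\mu\times\mu$,'' but for finitely additive $\mu$ there is no canonical product; your computation only uses iterated integrals, so the argument is unaffected, but the phrase is best avoided.
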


\begin{proof}
    Let $f(x) = \int f_i(x)\dd\mu$, where $f_i(x) = 0$ for $x \not\in G_i$, which is safe as $\mu$ is cofinal, and hence $\mu(\{i|f_i(x)=0\}) = 0$. Suppressing $\mu$ to keep track of variables, we note that
    \begin{align*}
        \Vert f(xy)-f(x)f(y)\Vert &= \bigg\Vert \int_\kappa f_i(xy)\dd i - \bigg(\int_\kappa f_i(x)\dd i\bigg)\bigg(\int_\kappa f_j(y)\dd j\bigg)\bigg\Vert\\
        &= \bigg\Vert \int_\kappa f_i(xy)-f_i(x)\int_\kappa f_j(y)\dd j\dd i \bigg\Vert\\
        &= \bigg\Vert \int_\kappa\int_\kappa f_i(x)f_i(y)-f_i(x)f_j(y)\dd j \dd i\bigg\Vert\\
        &= \bigg\Vert \int_\kappa\int_\kappa f_i(x)\big(f_i(y)-f_j(y)\big)\dd j \dd i\bigg\Vert\\
        &\leq \int_\kappa \int_\kappa \big\Vert f_i(x)\big(f_i(y)-f_j(y)\big)\big\Vert\dd j \dd i\\
        &\leq \int_\kappa \int_\kappa \Vert f_i(y)-f_j(y)\Vert \dd j \dd i
    \end{align*}
    \p To conclude the desired bound, we then note that the integrand is immediately bounded by taking a supremum across all of $G = \bigcup_{i<\kappa}G_i$ and for any pair $x,y\in G$ we can find an $i<\kappa$ such that $x,y\in G_i$, so it is sufficient to look at subgroups listed in our chain. Further, the 0 evaluation off of the current subgroup is of negligible impact, as $\mu$ is cofinal, and for the same reason we can see that $\int_{\kappa} g(i)\dd i = \int_{\kappa\setminus\{z|z\leq \beta\}} g(i)\dd i$ for any $\beta$. We conclude the proof by noting that, applying this for any $\alpha <\kappa$, we have
    \begin{equation*}
        \int_\kappa \int_\kappa \Vert f_i(y)-f_j(y)\Vert \dd j \dd i = \int_{\kappa\setminus\{z|z\leq\alpha\}} \int_{\kappa\setminus\{z'|z'\leq i\}} \Vert f_i(y)-f_j(y)\Vert \dd j \dd i
    \end{equation*}
    Applying the $\infty$-norm bounds guaranteed by integration to this integrand, and that we have the bound on $\delta(f)$ above for any $\alpha<\kappa$, we complete the proof.
\end{proof}
\p By the above lemma, it is immediate that controlling asymptotically wild variation behavior in representations approximating a given function on each named subgroup is central to inductivity of SUS in the presence of a continuous uniform bound on modulus functions. We note that, by a result of Kazhdan, every amenable group has a bound of $F_G(\eps)\leq 2\eps$ for small enough $\eps$, and our Theorem \ref{productpreserve.thm} relies on similar decay asymptotics that hold in all known examples.

\p Performing some formal computations under the assumption of several invariance and limit interchange properties of $\int \dd t$, which we know fail often and are why particular measures are so well studied, we manipulate the ``oscillation-like" $\iint f(i)f(j)\dd i\dd j$ expression from Proposition \ref{asympdef}. One very handy tool we had used before is translation invariance, so let's try $\iint f(i+t)f(j+t)\dd j\dd i$, which changes nothing. That's a constant function in $t$, so we can freely integrate $\iiint f(i+t)f(j+t)\dd j \dd i \dd t$. We then want to swap the integration order, so as to move $t$ to the innermost integral. Since we can assume $t \gg i, j$ by cofinality of our mean, we get that our expression is equal to $\iiint f(t)^2\dd t \dd j \dd i$ by ordinal arithmetic, which ``gives" us 0 oscillation, under these formal computations and symbol manipulations.
\p The issue there is we do not know if we can perform these kinds of manipulations, but the following results show us how we can guarantee at least some of these properties:

\begin{proposition}\label{measureexistence}
    There exists a cofinal probability measure on every ordinal of the form $\omega^\alpha$ which is right translation invariant for ordinals of lesser exponent. In particular, it is right translation invariant for the infinite cardinals $\kappa$ (considered as initial ordinals).
\end{proposition}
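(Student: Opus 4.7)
The plan is transfinite induction on $\alpha$, constructing $\mu_\alpha$ via iterated Banach-limit-style means. For $\alpha = 1$, take any Banach limit on $\omega$ as $\mu_1$: it is cofinal and translation-invariant. For the successor step $\alpha = \beta + 1$, decompose $\omega^{\beta+1} = \omega^\beta \cdot \omega$ into rows indexed by $n < \omega$; for $A \subseteq \omega^{\beta+1}$ with row-slices $A_n := \{r < \omega^\beta : \omega^\beta n + r \in A\}$, define
\[
\mu_{\beta+1}(A) \;:=\; L_n\bigl(\mu_\beta(A_n)\bigr),
\]
where $L$ is a cofinal translation-invariant mean on $\omega$. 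Cofinality holds because initial segments $\{x \leq \xi\}$ of $\omega^{\beta+1}$ meet only finitely many rows, which $L$ annihilates; invariance under right translation by $\gamma < \omega^\beta$ follows from the identity $(\omega^\beta n + r) + \gamma = \omega^\beta n + (r + \gamma)$, valid since $\omega^\beta$ is additively closed, which reduces $T_\gamma$ to a row-wise shift on $\omega^\beta$ where $\mu_\beta$ is inductively invariant.

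For a limit ordinal $\alpha$, fix a strictly increasing cofinal sequence $(\alpha_\iota)_{\iota < \mathrm{cf}(\alpha)}$ in $\alpha$ and a cofinal translation-invariant mean $L$ on $\mathrm{cf}(\alpha)$, and define
\[
\mu_\alpha(A) \;:=\; L_\iota\bigl(\mu_{\alpha_\iota}(A \cap \omega^{\alpha_\iota})\bigr).
\]
For any $\gamma$ of lesser exponent --- that is, $\gamma < \omega^\beta$ for some $\beta < \alpha$ --- we have $\gamma < \omega^{\alpha_\iota}$ for all sufficiently large $\iota$, and the additive closure of $\omega^{\alpha_\iota}$ ensures that $T_\gamma^{-1}(A) \cap \omega^{\alpha_\iota}$ coincides with the translate computed inside $\omega^{\alpha_\iota}$; the inductive invariance of $\mu_{\alpha_\iota}$ then combines through $L$ to give $\mu_\alpha(T_\gamma^{-1}(A)) = \mu_\alpha(A)$. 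In the cardinal case, $\kappa = \omega^\kappa$ for uncountable initial ordinals $\kappa$, and the construction directly furnishes a cofinal measure on $\kappa$ invariant under right translation by every $\gamma < \kappa$.

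The main obstacle is the limit case: producing a cofinal translation-invariant mean on $\mathrm{cf}(\alpha)$ must itself be arranged recursively, which is clean when $\mathrm{cf}(\alpha)$ is accessible by the inductive hypothesis (for instance, of the form $\omega^\lambda$ with $\lambda$ smaller in rank) but may require a separate boot-strap at cofinalities of high complexity. A secondary subtlety is the precise scope of ``lesser exponent'': the inductive construction controls invariance only for $\gamma$ whose Cantor-normal-form leading exponent is strictly bounded by the current stage, reflecting the general incompatibility between cofinality and invariance under ordinal addition by a $\gamma$ whose leading exponent sits too close to $\alpha$, since such translations collapse initial segments of $\omega^\alpha$ onto bounded image sets that a cofinal measure is forced to ignore.
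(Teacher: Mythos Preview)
Your approach is the same transfinite induction as the paper's, with two minor differences worth flagging.

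In the successor step you apply a translation-invariant mean $L$ on $\omega$ directly to the sequence $n\mapsto\mu_\beta(A_n)$, whereas the paper takes an ultrafilter limit of the Cesaro averages $\frac{1}{n}\sum_{i\leq n}\mu_\beta(A_i)$; both work, and yours is slightly cleaner. However, you only verify invariance for $\gamma<\omega^\beta$. To cover the full range $\gamma<\omega^{\beta+1}$ demanded by ``lesser exponent'' you must also treat $\gamma=\omega^\beta$, which shifts the row index $n\mapsto n+1$ and is absorbed precisely by the translation invariance of $L$ that you already assumed; the paper does this step explicitly and then invokes Cantor normal form to get all $\gamma<\omega^{\beta+1}$.

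Your ``main obstacle'' in the limit step is self-inflicted: nothing in your own argument uses the translation invariance of $L$ on $\mathrm{cf}(\alpha)$. The invariance of $\mu_\alpha$ under $T_\gamma$ is inherited entirely from the $\mu_{\alpha_\iota}$ once $\iota$ is large enough that $\gamma<\omega^{\alpha_\iota}$, and $L$ only needs to be \emph{cofinal} to annihilate the bounded set of bad indices. The paper accordingly just takes an ultrafilter extending the tail filter on the index set, avoiding any bootstrap on $\mathrm{cf}(\alpha)$ altogether.
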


\begin{proof}
    \p First, as motivation, we recreate the standard proof of how we can have such a measure on $\bZ$ and hence $\bN=\omega$. Take a nonprincipal ultrafilter $\mathcal{U}$ on $\omega$, and define $\mu_1(A)$ as \begin{equation*}
        \mu_1(A)=\lim_{\mathcal U(n)} \frac{|A\cap \{0,\dots, n-1\}|}{n}
    \end{equation*}
    As can be easily verified by directly computing additivity for disjoint sets and seeing that $\mu_1(A \Delta (A+1))$ computation gives $\lim_{\mathcal U(n)} \frac 2 n = 0$, this gives us an addition invariant finitely additive probability measure on $\omega$.
    \p Next, for each ordinal $\alpha$, we want to define a right translation invariant cofinal probability measure $\mu_\alpha$ on $\omega^\alpha$. We proceed by induction, assuming we have $\mu_\alpha$ to construct $\mu_{\alpha+1}$. Since $\omega^{\alpha+1}=\omega^\alpha\cdot\omega$, we can split it into $\omega$ many $\omega^\alpha$-segments; for any set of concern $A\subset \omega^{\alpha+1}$, define $A_n$ as the subset contained within the $n$th $\omega^\alpha$ segment. 
    \p We define $\mu_{\alpha+1}$ as follows:
    \begin{equation*}
        \mu_{\alpha+1}(A)=\lim_{\mathcal U(n)} \frac{\sum_{i=0}^n \mu_{\alpha}(A_i)}{n}
    \end{equation*}
    Since $\mu_\alpha$ is translation invariant for $x < \omega^\alpha$, and as $(A+x)_n = A_n+x$, this measure is invariant under right translation. Similarly to the case of $\omega$ above, it is trivial to compute and verify this is indeed a finitely additive measure. It is cofinal as any initial segment is contained in some $\omega^\alpha$ segment, and hence the ultralimit is of the form $\frac c n\to 0$ for some constant $c$ for sufficiently large $n$.
    \p By existence of Cantor normal form, it is sufficient to show translation invariance under $\omega^\alpha$ to show it for all feasible ordinals; however, $(A+\omega^\alpha)_n = A_{n+1}$, and hence the difference in measure is bounded by $\frac{\mu_{\alpha}(A_0) +\mu_{\alpha}(A_{n+1})}{n} \leq \frac 2 n \to 0$. So it is translation invariant for all ordinals whose Cantor normal form has smaller exponent.
    \p To complete the proof, we define $\mu_\beta$ for limit ordinals $\beta$ by
    \begin{equation*}
        \mu_\beta(A) = \lim_{\mathcal U_{\beta}(\alpha)} \mu_\alpha(A\cap \omega^\alpha),
    \end{equation*}
    where $\mathcal{U}_\beta$ is an ultrafilter extending the tail filter on $\omega^\beta$. Since this is translation invariant under $\omega^\alpha$ for each $\alpha<\beta$, and the $\omega^\alpha$ are cofinal in $\omega^\beta$ guaranteeing cofinality of the measure, this gives us our desired measure for every ordinal. The last part of the theorem follows from the fact that $\omega_\alpha= \omega^{\omega_\alpha}$ for each initial ordinal (= cardinal).
\end{proof}

\p We note that this measure and ordinal arithmetic implies $\iint f(i+j)\dd i \dd j = \iint f(i+j)\dd j \dd i = \int f(t)\dd t$, but this is insufficient to show the desired integral exchange due to interference between the variables, hence failing to fully imply inductivity. Some results on interchange of integrals between finitely additive measures are known, see \cite{Sin}, however the results of that work appear to be too coarse to give a good theory for interchange with respect to the same measure as the DLC condition in that work is too strong to ensure.

\p It is also reasonable to note that preservation of SUS in products $G\times H$ and inductivity among all SUS groups implies that we have a uniform bound on the modulus of stability.

\begin{proposition}
    If the class of SUS groups is inductive and closed under finite products, there exists a continuous $F(\eps):[0,2]\to [0,2]$ with $F(0)=0$ such that, for any SUS group $G$, we have $F_G(\eps)\leq F(\eps)$
\end{proposition}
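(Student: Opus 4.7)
The plan is to let $F^*(\eps) = \sup\{F_G(\eps) : G \text{ SUS}\} \in [0,2]$ (well-defined pointwise as the sup of a bounded subset of $\bR$, regardless of the set-theoretic status of the class of SUS groups), first prove that $F^*(\eps) \to 0$ as $\eps \to 0^+$, and then majorize $F^*$ by a continuous non-decreasing $F: [0,2]\to[0,2]$ with $F(0)=0$. The latter majorization is a routine piecewise-linear interpolation through points $(\delta_n, 2/n)$ where $\delta_n \downarrow 0$ is chosen so that $F^*(\eps) \leq 1/n$ whenever $\eps \leq \delta_n$; the real content is the convergence at $0$.

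To prove the convergence I would argue by contradiction. Suppose there were a $\delta > 0$ and a sequence $\eps_k \downarrow 0$ with $F^*(\eps_k) \geq \delta$, and for each $k$ pick an SUS group $G_k$ witnessing $F_{G_k}(\eps_k) \geq \delta/2$. The idea is to assemble the $G_k$ into a single SUS group that inherits all of these failures: by closure under finite products each $H_n := G_1 \times \cdots \times G_n$ is SUS, and the canonical embeddings $(g_1,\ldots,g_n) \mapsto (g_1,\ldots,g_n,1)$ exhibit $H_\infty := \bigcup_n H_n \cong \bigoplus_k G_k$ as the union of an ascending chain of SUS groups; hence $H_\infty$ is SUS by inductivity, and in particular $F_{H_\infty}$ is continuous at $0$.

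The key comparison step is to show $F_{G_k} \leq F_{H_\infty}$ for each fixed $k$. Given any $f: G_k \to U(\mathcal H)$ with $f(1)=\id$ and $\delta(f) \leq \eps$, I would form $\tilde f = f \circ \pi_k : H_\infty \to U(\mathcal H)$, where $\pi_k: H_\infty \to G_k$ is the coordinate projection. Because $\pi_k$ is a group homomorphism, $\delta(\tilde f) \leq \delta(f) \leq \eps$, so SUS of $H_\infty$ produces a representation $\rho: H_\infty \to U(\mathcal H)$ with $\Vert \tilde f - \rho\Vert_\infty \leq F_{H_\infty}(\eps)$; restricting $\rho$ along the canonical inclusion $G_k \hookrightarrow H_\infty$ yields a representation of $G_k$ within $F_{H_\infty}(\eps)$ of $f = \tilde f|_{G_k}$. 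Consequently $F_{H_\infty}(\eps_k) \geq F_{G_k}(\eps_k) \geq \delta/2$ for every $k$, contradicting continuity of $F_{H_\infty}$ at $0$ since $\eps_k \to 0$.

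The main obstacle is precisely this comparison: for an arbitrary subgroup inclusion $G \leq H$ nothing a priori forces $F_G \leq F_H$, because extending an almost-homomorphism from $G$ up to $H$ could amplify the defect. The product hypothesis is what saves the argument by making $G_k$ a retract of $H_\infty$ via $\pi_k$, so that the pullback $f \circ \pi_k$ has the same defect as $f$. Everything else (the sup-and-majorant dance, the restriction of representations to subgroups) is formal once this retract structure is in hand.
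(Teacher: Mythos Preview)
Your argument is correct and follows the same strategy as the paper: assume no uniform bound, pick witnesses $G_k$ whose moduli are jointly discontinuous at $0$, assemble them into a direct sum via finite products and inductivity, and then contradict SUS of the direct sum by extending bad almost-homomorphisms along the coordinate projections. The paper phrases the last step as ``extend $f_i$ to $H_\kappa$ in the obvious fashion ignoring all other factors'' and invokes $F_{G\times H}\geq F_G$; your retract formulation via $\pi_k$ is the same fact made explicit. The only cosmetic difference is that the paper carries the construction transfinitely over an arbitrary cardinal $\kappa$, whereas you (correctly) observe that a countable chain $H_n = G_1\times\cdots\times G_n$ already suffices, since continuity at $0$ is a sequential condition.
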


\begin{proof}
    If no such bound existed, take a sufficiently long $\kappa$-sequence of SUS groups $(G_i)$ such that $\sup_{i<\kappa} F_{G_i}$ is discontinuous at 0. Since for every $i < \omega$, we have $H_i= \bigoplus_{j<i} G_j$ is SUS, by inductivity we have that $H_\omega$ is SUS. Continuing similarly by ordinal induction, applying product preservation and inductivity as requisite, we find that $H_\kappa$ is SUS.
    \p Applying our hypothesis that the moduli of stability are not uniformly bounded by a continuous function, we find for each $G_i$ a map $f_i$ of small defect requiring sufficiently large distance to representations and extend it to $H_\kappa$ in the obvious fashion ignoring all other factors. This has the same distance to a representation, much as how $F_{G\times H} \geq F_G$, and as such we have that $F_{H_\kappa}$ cannot be continuous at 0, contradicting our assumption of product preservation and inductivity.
\end{proof}

\p Among finite dimensional unitary groups, however, we can prove directly prove inductivity in a strong quantitative form by making use of the local compactness in finite dimensional spaces. This can be done fiberwise, even, giving the following theorem:

\begin{theorem}\label{4.6}
    Given a proper $\kappa$-sequence of subgroups $(G_i)$ which are all uniformly $(U(d), \Vert\cdot\Vert_s)$ stable for any norm $\Vert\cdot \Vert_s$ such that $F(\eps) = \sup_i F^{(d)}_{G_i}(\eps)$ is continuous at 0, we have that $G = \bigcup_i G_i$ is uniformly stable with $F^{(d)}_G(\eps)\leq F(\eps)$.
\end{theorem}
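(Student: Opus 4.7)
The plan is to exploit that $U(d)$ is compact under any norm $\Vert\cdot\Vert_s$ (since $M_d(\bC)$ is finite dimensional and $U(d)$ is closed and bounded therein) so that one can take a pointwise ultrafilter limit of the representations guaranteed on each $G_i$ and land back in a representation on the union. Unlike the infinite-dimensional situation addressed via Proposition \ref{asympdef}, no integration over $\kappa$ is needed --- point evaluation into a compact space is enough, and this is exactly why the finite-dimensional case admits a clean quantitative result.

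First I would fix, assuming $\kappa$ is infinite (the chain-stabilizing and finite cases being trivial), an ultrafilter $\mathcal{U}$ on $\kappa$ that extends the tail filter, so that $\{i<\kappa:i>\alpha\}\in\mathcal{U}$ for every $\alpha<\kappa$. Given any $f:G\to U(d)$ with $f(1)=\id$ and $\delta(f)\leq\eps$, the restriction $f|_{G_i}$ satisfies $\delta(f|_{G_i})\leq\eps$, so by the uniform $(U(d),\Vert\cdot\Vert_s)$-stability of $G_i$ we may select a representation $\rho_i:G_i\to U(d)$ with $\Vert f|_{G_i}-\rho_i\Vert_\infty\leq F_{G_i}^{(d)}(\eps)\leq F(\eps)$. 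Extending each $\rho_i$ to $G$ by setting $\rho_i(x)=\id$ for $x\notin G_i$, I would then define
\begin{equation*}
\rho(x)=\lim_{\mathcal{U}(i)}\rho_i(x)\in U(d),
\end{equation*}
which exists by compactness of $U(d)$ in $\Vert\cdot\Vert_s$.

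Next I would verify the two required properties. For the homomorphism property, given $x,y\in G$ one picks $i_0$ with $x,y\in G_{i_0}$, so $xy\in G_{i_0}\subseteq G_i$ for $i\geq i_0$ and $\rho_i(xy)=\rho_i(x)\rho_i(y)$ on the tail $\{i:i\geq i_0\}\in\mathcal{U}$; since multiplication on the compact metric group $(U(d),\Vert\cdot\Vert_s)$ is jointly continuous, ultrafilter limits commute with it and $\rho(xy)=\rho(x)\rho(y)$. For the norm bound, for any $x\in G$ one has $\Vert f(x)-\rho_i(x)\Vert_s\leq F(\eps)$ for every $i\geq i_0$ with $x\in G_{i_0}$, and the closed ball of radius $F(\eps)$ about $f(x)$ is ultrafilter-limit stable, giving $\Vert f(x)-\rho(x)\Vert_s\leq F(\eps)$. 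Taking the supremum over $f$ with $\delta(f)\leq\eps$ yields $F_G^{(d)}(\eps)\leq F(\eps)$, as desired.

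The essentially nontrivial ingredient is the interchange of the ultrafilter limit with multiplication, which is precisely where finite-dimensionality matters: multiplication on bounded subsets of $B(\mathcal{H})$ is jointly $\Vert\cdot\Vert_s$-continuous only when $\mathcal{H}$ is finite-dimensional, whereas in the infinite-dimensional setting one would be pushed into a weak-operator topology and lose the product step. Beyond this, the only thing to check is that $\mathcal{U}$ can be chosen cofinal on $\kappa$, which is a routine application of Zorn's lemma, so I would expect the joint-continuity argument to be the only genuine substance of the proof.
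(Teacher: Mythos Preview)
Your proof is correct and follows essentially the same approach as the paper: both use compactness of $U(d)$ to pass to a pointwise limit of the approximating representations $\rho_i$ along a cofinal collection, then invoke closedness of balls for the norm bound and continuity of multiplication for the homomorphism property --- the paper phrases this via a limit point in the compact product $U(d)^G$, and its subsequent Remark even spells out your ultrafilter formulation verbatim. One small quibble with your final paragraph: multiplication on norm-bounded sets of $B(\mathcal H)$ is jointly norm-continuous in \emph{every} dimension; the genuine obstruction in infinite dimensions is the loss of norm-compactness of $U(\mathcal H)$, which is what forces the passage to a weaker topology where the product step then fails.
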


\begin{proof}
    Take a function $f:G\to U(d)$ of defect $\eps$ and choose representations $\rho_i$ within $F(\eps)$ of $f|_{G_i}$ for each $i<\kappa$, then extend these to functions on all of $G$ by making it uniformly $1$ outside $G_i$. Since $U(d)$ is compact, so is $U(d)^G$, so there is a limit point $\rho$ to which a \emph{cofinal} collection of the $\rho_i$ converge to pointwise. Since for each $x\in G$ we know that eventually $\Vert f(x) - \rho_i(x)\Vert \leq F(\eps)$, we know that $\Vert f(x) - \rho(x)\Vert \leq F(\eps)$ as that defined a closed set, and, similarly, continuity of multiplication by all norms being equivalent would imply $\rho(xy) = \rho(x)\rho(y)$ for each $x,y\in G$. This gives the desired representation with a preserved bound on our modulus of stability.
\end{proof}

\subsubsection*{Remark}
    The manner of choice of $\rho$ can be performed uniformly (up to choices of $(\rho_i)$, of course) among all $f$ and proper $\kappa$-sequences $(G_i)$ by choice of a cofinal ultrafilter $\mu$ on $\kappa$ and defining $\rho = \int \rho_i \dd\mu(i)$, where the integration in this case can also be seen in the sense of an ultralimit in the norm topology. This uniformity does not depend on the quantitative bounds invoked in the choice of the proper $\kappa$-sequence, nor on the dimension $d$, but makes essential use of the finiteness of $d$ (and hence compactness) to remain as a representation over every named subgroup, which eventually covers $G$.

\p As a trivial consequence of classical model theoretic results, we derive the following corollary after recalling a definition, though we mention it explicitly to emphasize the quantitative nature of both the closures and the classes among which it is existentially closed.

\begin{definition}[Existentially closed]
    For a language $L$, an object $X$ is existentially closed with respect to a class $\mathcal K$ of $L$-structures if for any object $K \in \mathcal K$ which extends $X \leq K$, and any $\exists_1$ sentence $\phi$ in $L(X)$, we have that $K \models \phi$ implies that $X \models \phi$. Equivalently, for any $X \leq K \in \mathcal K$, there is an ultrafilter $\mu$ where $X \leq K \leq X^\mu$ such that the composite embedding $X \leq X^\mu$ is elementary.
\end{definition}

\begin{corollary}\label{4.9}
    Given any continuous decreasing function $F(\eps):[0,2] \to [0,2]$ such that $F(0) = 0$, any subset $A\subset \bN$, and any group $G$ such that $F^{(d)}_G(\eps) \leq F(\eps)$ for all $d\in A$, there exists a group $G' \geq G$ which is existentially closed among groups satisfying these stability estimates \emph{and} $F^{(d)}_{G'}(\eps)\leq F(\eps)$ for all $d\in A$.
\end{corollary}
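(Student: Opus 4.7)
The plan is to recognize the class in question as an inductive class of $L$-structures for the language of groups and then invoke the standard model-theoretic construction of existential closures. Let $\mathcal K$ denote the class of groups $H$ with $F^{(d)}_H(\eps) \leq F(\eps)$ for every $d \in A$; the hypothesis places $G$ in $\mathcal K$.

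The first step is to verify that $\mathcal K$ is inductive. Given a chain $(H_i)_{i \in I}$ in $\mathcal K$, I would replace $I$ by a cofinal well-ordering as in the preliminary remarks of Section 4 and apply Theorem \ref{4.6} separately for each $d \in A$. The pointwise supremum $\sup_i F^{(d)}_{H_i}(\eps)$ is bounded above by $F(\eps)$; since $F(0) = 0$ and $F$ is continuous, this supremum vanishes at the origin and is therefore continuous at $0$. Theorem \ref{4.6} then yields $F^{(d)}_{\bigcup_i H_i}(\eps) \leq \sup_i F^{(d)}_{H_i}(\eps) \leq F(\eps)$, so $\bigcup_i H_i \in \mathcal K$ and $\mathcal K$ is inductive.

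The second step is a direct appeal to the classical existence theorem for existential closures in inductive classes (see e.g.\ Hodges, \emph{Model Theory}, \S8.3): one constructs a continuous chain $G = G_0 \leq G_1 \leq \ldots \leq G_\lambda = G'$ for $\lambda$ a regular cardinal larger than $|G| + |L|$. At each successor stage, pick (via a bookkeeping function) an $\exists_1$ sentence $\phi$ in $L(G_\alpha)$ that is consistent with $\mathcal K$ --- i.e., realized in some $\mathcal K$-extension $K \geq G_\alpha$ --- and set $G_{\alpha+1} = K$; at limit stages take unions, which remain in $\mathcal K$ by Step 1. With appropriate bookkeeping over $\lambda$-many stages, every consistent $\exists_1$ sentence over the limit $G' = G_\lambda$ has been realized in $G'$, which is the ec condition of the definition. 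A final application of Theorem \ref{4.6} to the full chain confirms $F^{(d)}_{G'}(\eps) \leq F(\eps)$ for all $d \in A$, so $G'$ itself lies in the class.

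The only nontrivial point --- and it is quite mild --- is ensuring that every term in the chain remains inside $\mathcal K$. At successor stages this is by construction, since we extend into an existing witness $K \in \mathcal K$; at limit stages it is precisely the inductivity established in Step 1 from Theorem \ref{4.6}. The remainder of the argument is the standard Henkin-style closure procedure and is entirely orthogonal to the analytic content of the paper.
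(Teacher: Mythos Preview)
Your proposal is correct and matches the paper's intended argument: the paper itself offers no proof beyond calling the corollary ``a trivial consequence of classical model theoretic results,'' and you have supplied exactly those results---inductivity of the class via Theorem~\ref{4.6} followed by the standard chain construction of existentially closed members.
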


We remark the above theorem and corollary apply essentially unchanged for any family $\mathscr{G}$ of compact metric groups in the appropriate fashion (where $A\subset \bN$ is instead replaced by $A\subset \mathscr{G}$), but the finite dimensional unitaries are of particular interest, of course. For further information on this, \cite{FR1} utilizes and proves properties around Ulam stability or the further generalization of uniform $\mathfrak U$ stability, including relevance towards determining amenability of the Thompson group or other self-similar groups.

\subsubsection*{Remark}
    Using a result of Francesco Fournier-Facio and Bharatram Rangarajan, Theorem 1.3 in \cite{FR1}, we can see that for any countable group $G$ and constant $c$ sufficiently large, there is a countable group $G' \geq G$ which is Ulam stable (or any other choice of uniformly $\mathfrak{U}$ stable to bound) with $F^{fd}_{G'} \leq c\eps$ that is existentially closed among  groups with such bounds. Because of how ``easy" it is to grant this kind of stability, one might ask just how broadly or uniformly we can create ``existential closures" of groups. If one has some form of quantitative control on the linear estimates arising, namely such that you can find a $c$ large enough to where enough groups $H \geq G$ have some infinite amenable $\Gamma$ with $H\wr \Gamma$ bounded in estimate by $c$, then the group $G'$ so constructed could be chosen to be existentially closed among \emph{all} groups. 

To that end, we ask the following question:
\begin{problem}
    For a given countable group $G$, what are the optimal Lipschitz constants with respect to Ulam stability of the groups $G\wr A$ for $A$ countably infinite amenable? Conversely, what are the optimal constants possible for a fixed $A$ varying $G$ across all countable groups?
\end{problem}

\p We conjecture that some control on the linear estimates guaranteed for fixed countably infinite amenable $A$ in \cite{FR1} for wreaths may be controllable via some embeddings $(X\times Y) \wr A \leq (X\wr A)\times (Y \wr A) \leq (X\times Y)\wr A^2$ or otherwise finding some appropriate coamenable embeddings, but we have not yet found the correct estimates for coamenable subgroups or coamenable embeddings to best make use of for this purpose. While some analogues of the estimates presented here can be adapted to a ``coamenable convolution" setting, some difficulties appear to arise in the correct defect bounding so as to ensure convergence to representations. Further, the dependence on choice of transversal type maps appears to be high, making these integral limiting processes very sensitive to the structure of the embedding.

\section{Further Directions}

\p The theorems here depend significantly on decay estimates of the modulus near 0, but the issue is that we lack many theorems which control the decay asympotically. While \cite{FR1} does prove ways to construct many countable groups which have $O(\eps)$ decay at 0, \cite{GLMR} provides asymptotic cohomology conditions for $O(\eps)$ decay and Proposition 1.0.6 describes $U(1)$-stability, and \cite{BOT} in Section 5 does prove in Theorem 5.1 that $SL(n,A), n\geq 3$, is Ulam stable with Lipschitz decay, there is a dearth of control in general. For this reason, we find it reasonable that a further inquiry into the decay asymptotics which can occur in various stability conditions and raise the following conjecture.

\begin{conjecture}\label{conj5.1}
    Every Strongly Ulam Stable group $G$ has $O(\eps)$ decay at 0 of the SUS modulus $F_G$.
\end{conjecture}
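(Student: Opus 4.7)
Since the statement is a conjecture, the plan is necessarily speculative. My overall strategy would mirror the iterative quadratic-improvement scheme at the end of the proof of Theorem \ref{productpreserve.thm}: starting from $f$ of defect $\eps$, produce a new map $f_1$ of defect $O(\eps^2)$ at distance $O(\eps)$ from $f$, iterate so that $f_n$ converges to a representation $\rho$, and sum the displacements geometrically. The plan would have two stages: (i) establish a H\"older dichotomy that continuity of $F_G$ at $0$ alone forces $F_G(\eps) \in O(\eps^s)$ for some $s > \frac{1}{2}$; and (ii) promote any such H\"older bound to linear decay via an amenability-free smoothing transform, with summability requiring exactly $2s > 1$, as in Corollary \ref{3.13}.

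For stage (ii), the natural substitute for the amenable averaging of Theorem \ref{productpreserve.thm} is to use the SUS-provided nearly-representation as its own smoothing datum. Given $f$ of defect $\eps$ and $\rho$ with $\Vert f - \rho\Vert \leq F_G(\eps)$, bounded by $c\eps^s$ in the H\"older regime, set $\tilde f(x) = \rho(x)^{*} f(x)$; a computation paralleling the manipulations preceding \eqref{ineq4} should bound $\delta(\tilde f)$ by an $\eps^{2s}$-type quantity, after which polar decomposition as in \eqref{normalized} would produce a unitary-valued quadratic improvement. Iterating, and invoking Proposition \ref{prop5.2} for germ-level control of the successive moduli, the displacements would form a convergent geometric series whenever $2s > 1$, so the composite limit is a genuine representation within $O(\eps)$ of $f$.

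The principal obstacle, which I expect to be the genuinely hard part, is stage (i). There is no a priori reason a continuous function on $[0,2]$ vanishing at $0$ must admit polynomial control of any positive exponent, let alone one strictly above $\frac{1}{2}$. To force such a bound I would attempt a contradiction argument: given a witness family of low-defect maps $f_n \colon G \to U(\mathcal{H}_n)$ whose distances to any representation exceed every polynomial rate, assemble them on a direct sum or a Hilbert-space ultraproduct and apply the integral estimates of Section 2 to derive a single map whose stabilization would demand superpolynomial defect, contradicting continuity at $0$. Making this work without additional structural input appears difficult, and a cohomological obstruction in the spirit of the vanishing asymptotic $2$-cocycles of \cite{GLMR} seems the natural target, given their role there in producing $O(\eps)$ decay.

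If stage (i) resists direct attack, an indirect route would be to prove the stronger conjecture that every SUS group is amenable, reducing the present conjecture to Kazhdan's theorem \cite{Kaz}; the absence of known non-amenable SUS groups, emphasized in the introduction, makes such a reduction plausible, although it sidesteps rather than resolves the quantitative content of the conjecture.
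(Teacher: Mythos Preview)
The statement is a conjecture, and the paper does not prove it; it offers Proposition \ref{prop5.2} as evidence and motivation only. So there is no paper proof to compare against, and your proposal must be judged on its own merits.

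Your stage (ii) contains a concrete error. Write $f(x) = \rho(x) + E(x)$ with $\Vert E(x)\Vert \leq c\eps^s$, so that $\tilde f(x) = \rho(x)^* f(x) = I + \rho(x)^* E(x)$. Using $\Vert f(xy) - f(x)f(y)\Vert \leq \eps$ to expand $E(xy)$, one finds
\begin{equation*}
\tilde f(xy) - \tilde f(x)\tilde f(y) = \rho(y)^{-1}\big(\rho(x)^* E(x)\big)\rho(y) - \rho(x)^* E(x) + O(\eps) + O(\eps^{2s}),
\end{equation*}
and the commutator-type term $\rho(y)^{-1}\big(\rho(x)^* E(x)\big)\rho(y) - \rho(x)^* E(x)$ is generically of size $\eps^s$, not $\eps^{2s}$. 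In Theorem \ref{productpreserve.thm} the corresponding term is killed precisely by the amenable average producing the cancellation behind \eqref{ineq4}; without an averaging mechanism there is no reason for it to vanish. So the ``quadratic improvement'' step does not follow from the definition of $\tilde f$. There is a second structural problem: even if $\tilde f$ had small defect and you found a representation $\sigma$ near it, the map $x\mapsto \rho(x)\sigma(x)$ is not a representation unless $\rho$ and $\sigma$ commute, so it is unclear how the iteration would return an improved approximant to $f$ itself.

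Your stage (i) you already flag as the hard part, and rightly so: continuity at $0$ imposes no polynomial rate whatsoever, and the direct-sum/ultraproduct sketch does not indicate which inequality would actually be violated. The cohomological route via \cite{GLMR} is more promising as a target, but as stated it presupposes exactly the linear estimate you are trying to prove. The indirect route through amenability is honest about the state of affairs: it would settle the conjecture, but is a strictly harder open problem.
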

\p This conjecture is a weaker form of the question of existence of non-amenable SUS groups and could be considered as related to the question posed in \cite{BOT} right after Theorem 3.1 of that paper, though that was asking about amenable groups which have a known Lipschitz bound from these theorems. While this is obviously inspired by the idea that they may already be amenable, during the writing of this paper we have noticed that methods of gluing approximate representations together does appear to prevent significant reductions in Lipschitz constants by passing to higher dimensions. In particular, a relatively easy observation:

\begin{proposition}\label{prop5.2}
    Let $G$ be SUS, and suppose $L'\eps^{s}\geq F_G(\eps) > L\eps$ for $s>\frac{1}{2}$ and $\eps>0$ sufficiently small. Then, given $0<c<L$, for all sufficiently small $\eps >0$ and $f_i: G\to U(\mathcal H_i)$ with $\delta(f_i)\geq \eps$ and $d(f_i,\mathrm{Hom}(G,U(\mathcal H_i)))>L\eps$ for $i=1,2$, we have that $d(f_1\oplus f_2, \mathrm{Hom}(G, U(H_1\oplus H_2))) > c\eps$.
\end{proposition}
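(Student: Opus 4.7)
The plan is by contradiction. Suppose $\rho\colon G\to U(\mathcal H_1\oplus\mathcal H_2)$ is a representation with $\Vert\rho-(f_1\oplus f_2)\Vert_\infty \leq c\eps$. I will produce from $\rho$ a unitary approximate representation $\hat\rho_i\colon G\to U(\mathcal H_i)$ with $\Vert\hat\rho_i-f_i\Vert_\infty\approx c\eps$ but with defect $O(\eps^2)$, then invoke the SUS upper bound $F_G(\eps)\leq L'\eps^s$ to approximate $\hat\rho_i$ by a true representation within $L'\delta(\hat\rho_i)^s=O(\eps^{2s})$. Since $s>1/2$ forces $2s>1$, this tail is $o(\eps)$, giving $d(f_i,\mathrm{Hom})\leq (c+o(1))\eps<L\eps$ for small $\eps$ (either $i$), contradicting the hypothesis.

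The construction is compression to a block. Let $P_1,P_2$ be the projections onto $\mathcal H_1,\mathcal H_2$ and set $\tilde\rho_i(x)=P_i\rho(x)P_i\in B(\mathcal H_i)$. Because $f_1\oplus f_2$ is block-diagonal with respect to this decomposition, $\Vert\tilde\rho_i(x)-f_i(x)\Vert\leq c\eps$ and $\Vert P_i\rho(x)P_j\Vert\leq c\eps$ for $j\neq i$. The key observation is that both the defect and the non-unitarity of $\tilde\rho_i$ are \emph{quadratically} small: using $\rho(xy)=\rho(x)\rho(y)$ and $P_i+P_j=I$,
\begin{equation*}
\tilde\rho_i(xy)-\tilde\rho_i(x)\tilde\rho_i(y)=P_i\rho(x)P_j\rho(y)P_i,
\end{equation*}
whose norm is at most $\Vert P_i\rho(x)P_j\Vert\cdot\Vert P_j\rho(y)P_i\Vert\leq c^2\eps^2$, so $\delta(\tilde\rho_i)\leq c^2\eps^2$. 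Similarly, $I_{\mathcal H_i}-\tilde\rho_i(x)^*\tilde\rho_i(x)=P_i\rho(x)^*P_j\rho(x)P_i$ has norm at most $c^2\eps^2$, and analogously for $\tilde\rho_i\tilde\rho_i^*$.

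For sufficiently small $\eps$ this forces $\tilde\rho_i(x)$ to be invertible, so the polar unitary $\hat\rho_i(x)=\tilde\rho_i(x)\vert\tilde\rho_i(x)\vert^{-1}$ is well-defined, unitary, and satisfies $\hat\rho_i(1)=\id$ (since $\tilde\rho_i(1)=P_i=I_{\mathcal H_i}$). The spectral-calculus argument used to derive \eqref{normalized} shows $\Vert\hat\rho_i-\tilde\rho_i\Vert_\infty=O(c^2\eps^2)$; combining with the previous step and Proposition \ref{prop2},
\begin{equation*}
\Vert\hat\rho_i-f_i\Vert_\infty \leq c\eps + O(c^2\eps^2), \qquad \delta(\hat\rho_i)\leq \delta(\tilde\rho_i)+3\Vert\hat\rho_i-\tilde\rho_i\Vert_\infty = O(c^2\eps^2).
\end{equation*}
Applying the SUS upper bound to $\hat\rho_i$ yields $d(\hat\rho_i,\mathrm{Hom}(G,U(\mathcal H_i)))\leq L'\delta(\hat\rho_i)^s=O(\eps^{2s})$, and the triangle inequality gives $d(f_i,\mathrm{Hom})\leq c\eps+O(\eps^2)+O(\eps^{2s})$. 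Since both error terms are $o(\eps)$, the right-hand side is strictly less than $L\eps$ for all sufficiently small $\eps$, the desired contradiction.

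The hard part is obtaining the crucial quadratic improvement in the second step. A naive bound $\delta(\tilde\rho_i)\leq O(\eps)$ would only permit $d(\hat\rho_i,\mathrm{Hom})\leq O(\eps^s)$, which is not $o(\eps)$ when $s<1$, and the argument collapses. Both quadratic bounds---the one for the defect and the one for $\tilde\rho_i^*\tilde\rho_i-I$---arise from factoring through \emph{two} cross-block terms, each of norm $\leq c\eps$. Together with the hypothesis $s>1/2$ (giving $2s>1$), this is precisely what pushes the $L'\eps^{2s}$ correction below the leading $c\eps$; everything else in the proof---polar decomposition, Proposition \ref{prop2}, the triangle inequality---is routine.
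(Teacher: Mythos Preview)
Your proof is correct and rests on the same core computation as the paper's: compressing $\rho$ to a diagonal block, observing that the off-diagonal blocks have norm at most $c\eps$, and that therefore the defect of the compression is $O(\eps^2)$, with $s>\tfrac12$ turning this into an $o(\eps)$ correction.

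The difference is only in how the contradiction is closed. The paper runs the implication in the other direction: it uses the \emph{lower} bound $F_G(\nu)>L\nu$ together with $d(\rho_1,\mathrm{Hom})>(L-c)\eps$ to force $\delta(\rho_1)>\big(\tfrac{L-c}{L'}\eps\big)^{1/s}$, reads this off as a lower bound on $\Vert C_{12}C_{21}\Vert_\infty$ via the block product formula, and contradicts the off-diagonal estimate $\Vert C_{12}\Vert+\Vert C_{21}\Vert\leq 2c\eps$. You instead use the \emph{upper} bound $F_G\leq L'\eps^s$ applied to the unitarized compression $\hat\rho_i$ to manufacture an actual representation within $c\eps+O(\eps^{2s})$ of $f_i$, contradicting $d(f_i,\mathrm{Hom})>L\eps$ directly. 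These are contrapositives of one another. Your route has the advantage of explicitly handling the fact that the compression $\tilde\rho_i$ is not unitary---you pass to $\hat\rho_i$ via polar decomposition before invoking $F_G$---whereas the paper applies $F_G(\delta(\rho_1))$ to the non-unitary $\rho_1$ without comment, which strictly speaking lies outside the definition of $F_G$ and would need exactly the polar-decomposition step you supply.
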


\begin{proof}
    \p Fix some $0<c<L$, let $\mathcal H = \mathcal H_1\oplus \mathcal H_2$, and suppose $\rho:G\to U(\mathcal H)$ is a representation such that $\Vert \rho - f_1\oplus f_2\Vert_\infty \leq c\eps$. Making use of the orthogonal projections $p_i:\mathcal H\to \mathcal H_i$, we note a block decomposition of $\rho$ as
    \begin{equation}
        \rho = \begin{bmatrix}\rho_1 & C_{21} \\ C_{12} & \rho_2\end{bmatrix}.
    \end{equation}
    From this, and that the norm of the operator $A\mapsto p_1Ap_1+p_2Ap_2$ is 1, we note that $\Vert \rho_1\oplus \rho_2 - f_1\oplus f_2\Vert_\infty \leq c\eps$. By triangle inequality and assumptions, we similarly know that $\rho_1$ has distance greater than $(L-c)\eps$ from a representation in $U(\mathcal H_1)$, and hence $F_G(\delta(\rho_1)) > (L-c)\eps$. In particular, as $F(\nu) > L\nu$ for sufficiently small $\nu$, we can see that $L'\delta(\rho_1)^s \geq F(\delta(\rho_1)) > L\delta(\rho_1)$ when $\eps$ is sufficiently small. So, we know that 
    \begin{equation}\label{eq5.4}
        \delta(\rho_1) > \bigg(\frac{L-c}{L'}\eps\bigg)^{\frac{1}{s}}.
    \end{equation}
    \p We note that $\rho(xy)=\rho(x)\rho(y)$, and by utilizing our matrix block form, we can find that
    \begin{equation}\label{eq5.5}
        \rho(x)\rho(y) = \begin{bmatrix}
            \rho_1(x)\rho_1(y) + C_{21}(x)C_{21}(y) & \rho_1(x)C_{21}(y)+C_{12}(x)\rho_2(y)\\
            C_{12}(x)\rho_1(y)+\rho_2(x)C_{12}(y) & C_{12}(x)C_{21}(y)+\rho_2(x)\rho_2(y)
        \end{bmatrix}.
    \end{equation}
    Utilizing \eqref{eq5.5} above in tandem with \eqref{eq5.4}, we see that $\Vert C_{12}C_{21}\Vert_\infty > \big(\frac{L-c}{L'}\eps\big)^{1/s}$, so one of these terms has norm greater than $\big(\frac{L-c}{L'}\eps\big)^{1/2s}$ by submultiplicativity. However, by the triangle inequality utilizing that $\Vert \rho_1\oplus \rho_2 - f_1\oplus f_2\Vert_\infty \leq c\eps$ and $\Vert \rho - f_1\oplus f_2\Vert_\infty \leq c\eps$, we see that 
    \begin{equation}
        2c\eps \geq 2\Vert \rho - \rho_1\oplus \rho_2\Vert_\infty \geq \Vert C_{12}\Vert_\infty + \Vert C_{21}\Vert_\infty > \bigg(\frac{L-c}{L'}\eps\bigg)^{\frac{1}{2s}}
    \end{equation}
    which presents a contradiction for sufficiently small as $2s>1$ and hence the rightmost term is not Lipschitz at 0, so there is a point at which sufficiently small epsilon violate this inequality. Namely, the claim of the theorem holds when
    \begin{equation}
        \eps \leq \bigg(\frac{L-c}{(2c)^{2s}L'}\bigg)^{\frac{1}{2s(1-2s)}} 
    \end{equation}
\end{proof}

\p We note that in the above proposition, we only use the defect of the top left corner of $\rho$, which would allow gluings that do not permit a reduction in Lipschitz constant data in infinite direct sums (under one way to complete them). In particular, if we have a SUS group $G$ with H\"older exponent $s > \frac{1}{2}$, we can choose a sequence $f_i$ which captures that $F_G(\eps) \gg n\eps$ for sufficiently small $\eps < \eps_n$ if $s < 1$, or otherwise capture the Lipschitz constant $L$ of $F_G$ if $s=1$. That is, the germ of $F_G$ at 0 can be encoded as a single approximate representation over $G$ along with the projection to the tails of the sequence when our stability property is uniform across a class of spaces closed under infinite direct sums.
\p We remark further that none of the above relies on strong Ulam stability in particular, but only a control on distance from representations utilizing a defect term and our stability estimates over $\mathcal H_1$ to bound the distance from 0 of the off-diagonal terms. This, therefore, applies to a much broader class of stability properties over submultiplicative norm, but we state it over strong Ulam stability as being the topic of this paper.
\p Seeing as there is strong quantitative controls one can perform ``fiberwise" with stability properties once we have decay estimates near 0, and this behavior is as expected when we have H\"older exponent $s >\frac{1}{2}$, we ask whether one can find stability properties where this H\"older condition fails, and we posit that this strong quantitative control on loss of Lipschitz constants in these diagonal direct sums is good evidence towards controllability of these decays. Even in the absence of bad decays, having explicit forms on loss of constants under these transformations is of interest to control deviations or find analogues of error corrections.

\bibliography{MSSUSGroups}
\bibliographystyle{alpha}

\end{document}